\newtheorem{thm}{Theorem}[section]
\newtheorem{cor}[thm]{Corollary}
\newtheorem{defi}[thm]{Definition}
\newtheorem{example}[thm]{Example}
\newtheorem{pro}[thm]{Proposition}
\journal{arXiv}
\begin{document}

\begin{frontmatter}

\title{Some results on the generalized inverse of tensors and idempotent tensors}
\author[label3]{Lizhu Sun}\ead{sunlizhu678876@126.com}
\author[label3]{Baodong Zheng}\ead{zbd@hit.edu.cn}
\author[label1]{Changjiang Bu}\ead{buchangjiang@hrbeu.edu.cn}
\author[label2]{Yimin Wei}\ead{ymwei@fudan.edu.cn}

\address[label3]{School of Science, Harbin Institute of Technology, Harbin 150001, PR China}
\address[label1]{College of Science, Harbin Engineering University, Harbin 150001, PR China}

\address[label2]{School of Mathematical Sciences and Shanghai Key Laboratory of Contemporary Applied Mathematics, Fudan University, Shanghai, 200433, PR China}

\begin{abstract}
Let $\mathcal{A}$ be an order $t$ dimension $m\times n\times \cdots \times n$ tensor over complex field. In this paper, we study some {generalized inverses} of $\mathcal{A}$, the {$k$-T-idempotent tensors} and the idempotent tensors based on the general tensor product. Using the tensor generalized inverse, some solutions of the equation $\mathcal{A}\cdot x^{t-1}=b$ are given, where $x$ and $b$ are dimension $n$ and $m$ vectors, respectively. The {generalized inverses} of some block tensors, the eigenvalues of {$k$-T-idempotent tensors} and idempotent tensors are given. And the relation between the generalized inverses of tensors and the $k$-T-idempotent tensors is also showed.
\end{abstract}

\begin{keyword}
Tensor, Generalized inverse, Idempotent tensor, Tensor eigenvalue \\
\emph{AMS classification:} 15A69, 15A09
\end{keyword}

\end{frontmatter}

\section{Introduction}
\label{Introduction }

In recent years, there has been extensive attention and interest in the work of spectral theory of tensors and hypergraphs \cite{wei 2013}-\cite{Qi Shao}, since the research of Qi \cite{Qi2005}, Lim \cite{Lim} and Shao \cite{shao 2013}.

For a positive integer $n$, let $[n]=\{1,\ldots,n\}$. An order $t$ tensor $\mathcal{A}=(a_{i_1\cdots i_t})_{1\leq i_j\leq n_j}$ ($j=1,\ldots, t$)
is a multidimensional array with $ n_1n_2\cdots n_t$ entries.
Let $\mathbb{C}^{n_1\times\cdots\times n_t}$ be the set of all the order $t$ dimension $ n_1\times\cdots\times n_t$ tensors over complex field $\mathbb{C}$. Clearly, an order $2$ tensor is a matrix.
Let $\mathbb{C}_{t}^{m,n}$ denote the set of all the $m\times n\times \cdots \times n$ tensors of order $t$ over complex field. Let $\mathcal{D}={\rm diag}(d_{1\cdots 1},\ldots,d_{n\cdots n})\in\mathbb{C}_{t}^{n,n}$ be a diagonal tensor whose entries are all zero except for $d_{ii\cdots i}$, $i=1, \ldots, n$.
If $d_{ii\cdots i}=1$, $i=1,\ldots, n$, then $\mathcal{D}$ is the unit tensor with order $t$, denoted by $\mathcal{I}$.
For the vector $x=(x_1,x_2,\ldots ,x_n)^\mathrm{T}\in \mathbb{C}^n$ and the tensor $ \mathcal{A}\in\mathbb{C}_{t}^{m,n}$, let $\mathcal{A}\cdot x^{t-1}$ be a dimension $m$ vector whose $i$-th component is
\begin{eqnarray}\label{Eigen}
(\mathcal{A}\cdot x^{t-1})_i=\sum\limits_{{i_2}, \ldots ,{i_t} \in [n]} {a_{ii_2\cdots i_t}x_{i_2}x_{i_3}\cdots x_{i_t}},
\end{eqnarray}
where $i\in[m]$. In \cite{Qi2005}, Qi defined the \textit{eigenvalue of tensors}. For $ \mathcal{A}\in\mathbb{C}_{t}^{n,n}$, $\lambda\in\mathbb{C}$ is called the eigenvalue of $\mathcal{A}$ if there exists nonzero vector $x\in\mathbb{C}^n$ such that
$
\mathcal{A}\cdot x^{t-1}=\lambda x^{[t-1]}
$,
where $x^{[r]}=(x_1^r,x_2^r,\ldots ,x_n^r)^\mathrm{T}$.

Shao defined the \textit{general tensor product} in \cite{shao 2013}. For tensors $\mathcal{A}\in \mathbb{C}_{t}^{n,n}$ and $\mathcal{B}\in \mathbb{C}_{k}^{n,n}$, the {general tensor product} of them is  $\mathcal{A}\mathcal{B}\in \mathbb{C}_{(t-1)(k-1)+1}^{n,n}$ with entries as
\begin{eqnarray}\label{product}
(\mathcal{A}\mathcal{B})_{i\alpha_1\cdots \alpha_{t-1}}=\sum_{i_2,\ldots,i_t\in[n]}a_{ii_2\cdots i_t}b_{i_2\alpha_1}\cdots b_{i_t\alpha_{t-1}},
\end{eqnarray}
where $i\in[n]$, $\alpha_1,\ldots,\alpha_{t-1}\in[n]^{k-1}$.
Bu et al. showed that the {general product} of $\mathcal{A}\in \mathbb{C}_{t}^{m,n}$ and $\mathcal{B}\in \mathbb{C}_{k}^{n,m}$ also can be written as Eq.(\ref{product}), where $i\in[m]$, $\alpha_1,\ldots,\alpha_{t-1}\in[m]^{k-1}$.

The definition of the \textit{inverse of tensors} was given in \cite{Bu 2014}. For $\mathcal{A}\in \mathbb{C}_{t}^{n,n}$,  $\mathcal{B}\in \mathbb{C}_{k}^{n,n}$, if $\mathcal{A}\mathcal{B}=\mathcal{I}$, then $\mathcal{A}$ is called an order $t$ left inverse of $\mathcal{B}$, denoted by $\mathcal{B}^{L_{t}}$; $\mathcal{B}$ is called an order $k$ right inverse of $\mathcal{A}$, denoted by $\mathcal{A}^{R_{k}}$.

It is well known that there are many types of {generalized inverses} of matrices (operators) \cite{GI}.
Let $\mathbb{H}$ be a Hilbert space and $\mathfrak{L}(\mathbb{H})$ be the set of the linear operators on $\mathbb{H}$.
 Let $A\in \mathbb{C}^{m\times n}~(\in \mathfrak{L}(\mathbb{H}))$, $X\in \mathbb{C}^{n\times m}~(\in \mathfrak{L}(\mathbb{H}))$,
\[
{\rm (1)}~AXA=A;~~{\rm (2)}~XAX=X;~~{\rm (5)}~XA=AX.
\]
The matrix (operator) $X$ is said to be the \textit{$\{i\}$ inverse} of the matrix (operator) $A$ if the above equation (i) holds.
$A\{i\}$ is the set of all the {\{i\} inverses} of $A$. The matrix (operator) $X$ is called the \textit{group inverse} of the matrix (operator) $A$ if the equations (1), (2) and (5) hold, denoted by $A^\#$. And the group inverse is a kind of spectral generalized inverse \cite{GI}.


The generalized inverses of block matrices (operators) have important applications in algebraic connectivity and algebraic bipartiteness of graphs \cite{G 1}, Markov chains ([14,15]), resistance distance \cite{G 5} and so on.
Scholars gave many results on the representations of the generalized inverses of block matrices \cite{G 4}-\cite{G 7}.

In this paper, we study the generalized inverses of tensors, the $k$-T-idempotent tensors and the idempotent tensors.
This paper is organized as follows. In section 2, we show the definitions of some {generalized inverses} of tensors.
And using the generalized inverse, some solutions of the equation $\mathcal{A}\cdot x^{t-1}=b$ is gotten, where $\mathcal{A}\in\mathbb{C}_{t}^{m,n}$, the vectors $x\in\mathbb{C}^{n}$, $b\in\mathbb{C}^{m}$.
The tensor generalized inverses of some block tensors are given.
In the section 3, we give the definitions of the $k$-T-idempotent tensors and idempotent. There also some results on the eigenvalues of the $k$-T-idempotent tensors and idempotent tensors; the relation between the $k$-T-idempotent tensors (idempotent tensors) and the tensor generalized inverses.
In the section 4, some examples of tensor generalized inverses and idempotent tensors are presented.

\section{Generalized inverse of tensors}

In this section, we show the \{1\} inverse of a tensor $ \mathcal{A}\in\mathbb{C}_{t}^{m,n}$ first.

For a tensor $\mathcal{A}\in\mathbb{C}_{t}^{m,n}$, let $\textbf{R}(\mathcal{A})=\{\mathcal{A}\cdot y^{t-1} | ~y\in\mathbb{C}^n\}$, $\textbf{N}(\mathcal{A})=\{x |~ \mathcal{A}\cdot x^{t-1}=0,~x\in \mathbb{C}^n\}$.
Obviously, the equation
$\mathcal{A}\cdot x^{t-1}=b$ is solvable iff $b\in\textbf{R}(\mathcal{A})$,
where $x\in\mathbb{C}^n$. Next, we consider the problem that
for the
 tensor $ \mathcal{A}\in\mathbb{C}_{t}^{m,n}$,
whether  $\mathcal{G}\cdot (b^{[\frac{1}{s}]})^{k-1}$ is a solution of $\mathcal{A}\cdot x^{t-1}=b$ for all the vectors $b\in\textbf{R}(\mathcal{A})$ , where $ \mathcal{G}\in\mathbb{C}_{k}^{n,m}$ and $s=(t-1)(k-1)$. If it is a solution of $\mathcal{A}\cdot x^{t-1}=b$ for all the $b\in\textbf{R}(\mathcal{A})$. Then,
$ \mathcal{A}\mathcal{G}\cdot (b^{[\frac{1}{s}]})^s= b$, that is
$$ \mathcal{A}\mathcal{G}\cdot[(\mathcal{A}\cdot y^{t-1})^{[\frac{1}{s}]}]^{s}= \mathcal{A}\cdot y^{t-1},~for~all~y\in\mathbb{C}^n.$$
On the other hand, if $\mathcal{G}$ satisfies $ \mathcal{A}\mathcal{G}\cdot[(\mathcal{A}\cdot y^{t-1})^{[\frac{1}{s}]}]^{s}= \mathcal{A}\cdot y^{t-1}$ for all $y\in\mathbb{C}^n$.
 Since $b\in\textbf{R}(\mathcal{A})$, there exists a vector $y\in\mathbb{C}^n$ such that $  \mathcal{A}\mathcal{G}\cdot (b^{[\frac{1}{s}]})^s=  \mathcal{A}\mathcal{G}\cdot[(\mathcal{A}\cdot y^{t-1})^{[\frac{1}{s}]}]^{s}= \mathcal{A}\cdot y^{t-1}=b$. It implies that $ \mathcal{G}\cdot (b^{[\frac{1}{s}]})^{k-1}$ is a solution of $\mathcal{A}\cdot x^{t-1}=b$.

From the above discussion, we give the concept of the \{1\} inverse of tensors as follows.

\begin{defi}\label{defi [1]}
Let $ \mathcal{A}\in\mathbb{C}_{t}^{m,n}$ and $ \mathcal{X}\in\mathbb{C}_{k}^{n,m}$. If the equation
$$ \mathcal{A} \mathcal{X}\cdot [(\mathcal{A}\cdot y^{t-1})^{[\frac{1}{s}]}]^s= \mathcal{A}\cdot y^{t-1}$$
holds for all $y\in\mathbb{C}^n$, where $s=(t-1)(k-1)$, then $\mathcal{X}$ is called the order $k$ {\rm\{1\}} inverse of $ \mathcal{A}$, denoted by $\mathcal{A}^{(1)_{k}}$. Denote the set of all the order $k$ {\rm\{1\}} inverses of $\mathcal{A}$ by $\mathcal{A}\{1\}_{k}$.
\end{defi}
If an order $k$ right inverse of the tensor $\mathcal{A}\in\mathbb{C}_{t}^{n,n}$ exists, then $\mathcal{AA}^{R_k}=\mathcal{I}$, $\mathcal{AA}^{R_k}\cdot[(\mathcal{A}\cdot y^{t-1})^{[\frac{1}{s}]}]^s=\mathcal{I}\cdot[(\mathcal{A}\cdot y^{t-1})^{[\frac{1}{s}]}]^s=\mathcal{A}\cdot y^{t-1}$, where $y\in\mathbb{C}^n$ is an arbitrary vector and $s=(t-1)(k-1)$. Hence,  $\mathcal{A}^{R_k}$ is a \{1\} inverse of $\mathcal{A}$. The {\{1\} inverse} (with fixed order) of a tensor is not unique in general. If an order $k$ tensor $\mathcal{X}$ is a {\{1\} inverse} of $\mathcal{A}$, we write $\mathcal{A}^{(1)_k}=\mathcal{X}$.
When $t=k=2$, it is easy to see that Definition \ref{defi [1]} is the definition of the {\{1\} inverse} of matrices (see \cite{GI}).

\begin{pro}\label{pro 1}
Let $ \mathcal{A}\in\mathbb{C}_{t}^{m,n}$ and $x\in\mathbb{C}^n$. Let $\mathcal{A}^{(1)_k}$ denote a {\rm \{1\}} inverse of $\mathcal{A}$. If the equation $\mathcal{A}\cdot x^{t-1}=b$ is solvable, then $x=\mathcal{A}^{(1)_k}\cdot (b^{[\frac{1}{s}]})^{k-1}$ is a solution of it, where $s=(t-1)(k-1)$.
\end{pro}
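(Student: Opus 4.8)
The plan is to formalize the discussion that precedes the statement. First I would note that if $\mathcal{A}\cdot x^{t-1}=b$ is solvable then $b\in\textbf{R}(\mathcal{A})$, so there is some $y\in\mathbb{C}^n$ with $\mathcal{A}\cdot y^{t-1}=b$. Substituting $b=\mathcal{A}\cdot y^{t-1}$ into the identity that defines an order $k$ $\{1\}$ inverse (Definition \ref{defi [1]}) gives at once
\[
\mathcal{A}\mathcal{A}^{(1)_k}\cdot\bigl(b^{[\frac{1}{s}]}\bigr)^s=\mathcal{A}\mathcal{A}^{(1)_k}\cdot\bigl[(\mathcal{A}\cdot y^{t-1})^{[\frac{1}{s}]}\bigr]^s=\mathcal{A}\cdot y^{t-1}=b,
\]
where $s=(t-1)(k-1)$.

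It then remains to recognize the left-hand side as $\mathcal{A}\cdot x^{t-1}$ for $x=\mathcal{A}^{(1)_k}\cdot(b^{[\frac{1}{s}]})^{k-1}$. For this I would record the mixed-associativity identity
\[
(\mathcal{A}\mathcal{G})\cdot z^{s}=\mathcal{A}\cdot(\mathcal{G}\cdot z^{k-1})^{t-1},\qquad \mathcal{A}\in\mathbb{C}_t^{m,n},\ \mathcal{G}\in\mathbb{C}_k^{n,m},\ z\in\mathbb{C}^m,
\]
which follows by expanding both sides via (\ref{product}) and (\ref{Eigen}): the $i$-th component of the left-hand side sums $a_{ii_2\cdots i_t}g_{i_2\alpha_1}\cdots g_{i_t\alpha_{t-1}}$ against $z_{\alpha_1}\cdots z_{\alpha_{t-1}}$ over all $\alpha_1,\dots,\alpha_{t-1}\in[m]^{k-1}$, and the $\alpha_j$-sums factor into $\prod_{j=2}^{t}(\mathcal{G}\cdot z^{k-1})_{i_j}$, which is exactly the expansion of the right-hand side. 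Applying this with $\mathcal{G}=\mathcal{A}^{(1)_k}$ and $z=b^{[\frac{1}{s}]}$ rewrites the displayed equation as $\mathcal{A}\cdot x^{t-1}=b$ with $x=\mathcal{A}^{(1)_k}\cdot(b^{[\frac{1}{s}]})^{k-1}$, which is the claim.

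The only step that is not pure bookkeeping is the mixed-associativity identity; it is a vector specialization of Shao's associativity law for the general tensor product \cite{shao 2013}, so I would either invoke that result or include the one-line index verification above. A minor technical caveat worth a remark is that $b^{[\frac{1}{s}]}$ requires choosing $s$-th roots of the components of $b$; since any such choice satisfies $((b^{[\frac{1}{s}]})_i)^{s}=b_i$ for each $i$ and the argument uses nothing more, the conclusion does not depend on the branch selected.
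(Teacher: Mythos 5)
Your proof is correct and follows essentially the same route as the paper, which states Proposition~\ref{pro 1} without a separate proof and relies on the discussion preceding Definition~\ref{defi [1]}: pick $y$ with $\mathcal{A}\cdot y^{t-1}=b$, substitute into the defining identity to get $\mathcal{A}\mathcal{A}^{(1)_k}\cdot (b^{[\frac{1}{s}]})^s=b$, and read off the solution. The only addition you make is to spell out the mixed-associativity identity $(\mathcal{A}\mathcal{G})\cdot z^{s}=\mathcal{A}\cdot(\mathcal{G}\cdot z^{k-1})^{t-1}$ (a vector case of Shao's associativity law), which the paper uses implicitly; your index verification of it is correct.
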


For $a\in\mathbb{C}$, let
$
a^ +   = \left\{ {\begin{array}{*{20}c}
   {a^{ - 1} ,~a \ne 0,}  \\
   {0~,~a = 0.}  \\
\end{array}} \right.
$
\begin{pro}\label{unit 1th}
Let $\mathcal{A}={\rm diag}(a_1,a_2,\dots,a_n)\in\mathbb{C}_{t}^{n,n}$ be a diagonal tensor.
Then
\[{\rm diag}((a_1^+)^{\frac{1}{t-1}},(a_2^+)^{\frac{1}{t-1}}, \dots,(a_n^+)^{\frac{1}{t-1}})\in\mathbb{C}_{k}^{n,n}\]
is an order $k$ {\rm \{1\}} inverse of $\mathcal{A}$.
\end{pro}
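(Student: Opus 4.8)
The plan is to verify the defining identity of Definition \ref{defi [1]} by a direct computation, exploiting that both $\mathcal{A}$ and the candidate tensor are diagonal so that every general product collapses to a product of diagonal entries.

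First I would set $\mathcal{X}={\rm diag}\big((a_1^+)^{\frac{1}{t-1}},\ldots,(a_n^+)^{\frac{1}{t-1}}\big)\in\mathbb{C}_{k}^{n,n}$ and compute the general product $\mathcal{A}\mathcal{X}$ from Eq.(\ref{product}). Since $a_{ii_2\cdots i_t}=0$ unless $i_2=\cdots=i_t=i$, and an entry $x_{j\beta}$ of $\mathcal{X}$ (with $\beta\in[n]^{k-1}$) vanishes unless $\beta=(j,\ldots,j)$, exactly one term survives in the sum, and $\mathcal{A}\mathcal{X}$ is again diagonal, now of order $s+1$ with $s=(t-1)(k-1)$, its $i$-th diagonal entry being $a_i\big((a_i^+)^{\frac{1}{t-1}}\big)^{t-1}=a_ia_i^+$. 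By the definition of $a^+$ this equals $1$ when $a_i\neq0$ and $0$ when $a_i=0$; in particular $a_ia_i^+\cdot a_i=a_i$ for every $a_i\in\mathbb{C}$.

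Next I would evaluate both sides of the equation in Definition \ref{defi [1]} at an arbitrary $y\in\mathbb{C}^n$. By Eq.(\ref{Eigen}), $(\mathcal{A}\cdot y^{t-1})_i=a_iy_i^{t-1}$; writing $z=(\mathcal{A}\cdot y^{t-1})^{[\frac{1}{s}]}$ we have $z_i^{s}=a_iy_i^{t-1}$, where the branch of the $s$-th root is irrelevant since only $z_i^{s}$ will enter. Applying the diagonal tensor $\mathcal{A}\mathcal{X}$ of order $s+1$ to $z$ as in Eq.(\ref{Eigen}) again annihilates every off-diagonal contribution, so $\big(\mathcal{A}\mathcal{X}\cdot z^{s}\big)_i=(a_ia_i^+)\,z_i^{s}=(a_ia_i^+)\,a_iy_i^{t-1}=a_iy_i^{t-1}=(\mathcal{A}\cdot y^{t-1})_i$, which is exactly the required identity. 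Hence $\mathcal{X}\in\mathcal{A}\{1\}_k$.

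The computation is routine; the only points needing a little care are the bookkeeping of the multi-indices $\alpha_j\in[n]^{k-1}$ when collapsing the general product, and noting that the fractional-power notation $(\cdot)^{[\frac{1}{s}]}$ introduces no ambiguity here because it is always immediately raised back to the power $s$ inside a general product. I do not expect any genuine obstacle.
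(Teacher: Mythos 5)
Your proof is correct and follows essentially the same route as the paper's: compute that $\mathcal{A}\mathcal{X}$ is a diagonal tensor of order $s+1$ with diagonal entries $a_ia_i^+\in\{0,1\}$, then verify the defining identity componentwise. The paper compresses the final verification into ``by direct computation,'' whereas you spell out the key cancellation $(a_ia_i^+)a_iy_i^{t-1}=a_iy_i^{t-1}$; the substance is identical.
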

\begin{proof}
Let $\mathcal{X}={\rm diag}((a_1^+)^{\frac{1}{t-1}},(a_2^+)^{\frac{1}{t-1}},\dots,(a_n^+)^{\frac{1}{t-1}})$.
It follows from Eq.(\ref{Eigen}) that $(\mathcal{A}\cdot y^{t-1})_i=a_iy_i^{t-1}$, $i\in[n]$, $y\in\mathbb{C}^n$. By Eq.(\ref{product}), it yields that $\mathcal{A}\mathcal{X}$ is a diagonal tensor with the diagonal entries $1$ or $0$. By directly computation, we have
$$ \mathcal{A} \mathcal{X}\cdot [(\mathcal{A}\cdot y^{t-1})^{[\frac{1}{s}]}]^s= \mathcal{A}\cdot y^{t-1},$$
for all the $y\in\mathbb{C}^n$, where $s=(k-1)(t-1)$. Hence, $\mathcal{X}$ is an order $k$ {\{1\} inverse} of $\mathcal{A}$.
\end{proof}

The order $k$ right inverse of unit tensor exists and is not unique in general (see \cite{Bu 2014}), the order $k$ right inverse of a unit tensor is the order $k$ \{1\} inverse of itself.


Next, we show some results on the $\{1\}$ inverse of ※block tensors§.
Let $\mathcal{A}=(a_{i_1\cdots i_t})\in\mathbb{C}_{t}^{m,n}$. Let $\widetilde{\mathcal{A}}=(a_{i_1\cdots i_t})\in\mathbb{C}_{t}^{r,l}$ be a subtensor of $\mathcal{A}$, where $r\leq m$, $l \leq n$ (see \cite{block}).

\begin{thm}\label{T 1}
Let $\mathcal{A}\in\mathbb{C}_{t}^{m,n}$ and let $\widetilde{\mathcal{A}}\in\mathbb{C}_{t}^{r,l}$ be the subtensor of $\mathcal{A}$. If the entries of $\mathcal{A}$ are all zero except for $\widetilde{\mathcal{A}}$. Then each tensor $\mathcal{G}\in \mathcal{A}\{1\}_k$ is a tensor with the subtensor $\widetilde{\mathcal{G}}\in \mathcal{A}_1\{1\}_k$ and all the other entries are arbitrary.

\end{thm}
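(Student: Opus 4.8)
The plan is to peel the defining identity of a $\{1\}$ inverse down to a statement that only involves the block $\widetilde{\mathcal{A}}$, by exploiting the zero pattern of $\mathcal{A}$. Throughout set $s=(t-1)(k-1)$, and assume (after relabelling indices) that $\widetilde{\mathcal{A}}$ occupies the index ranges $[r]$ in the first slot and $[l]$ in the remaining slots, so the hypothesis reads $a_{i_1i_2\cdots i_t}=0$ unless $i_1\in[r]$ and $i_2,\dots,i_t\in[l]$. The first step is to record the ``associativity'' identity
$$(\mathcal{A}\mathcal{G})\cdot z^{s}=\mathcal{A}\cdot\bigl(\mathcal{G}\cdot z^{k-1}\bigr)^{t-1},\qquad z\in\mathbb{C}^{m},$$
which follows from Eq.~(\ref{Eigen}) and Eq.~(\ref{product}) by splitting the $s$ contracted indices of $\mathcal{A}\mathcal{G}$ into $t-1$ blocks of length $k-1$; this is exactly the computation already carried out in the paragraph preceding Definition~\ref{defi [1]}. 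Applying it with $z=b^{[\frac{1}{s}]}$ shows that $\mathcal{G}\in\mathcal{A}\{1\}_k$ if and only if $\mathcal{A}\cdot c^{t-1}=b$ holds for every $b\in\textbf{R}(\mathcal{A})$, where $c:=\mathcal{G}\cdot(b^{[\frac{1}{s}]})^{k-1}$; this is Proposition~\ref{pro 1} read as an equivalence.

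Next I would describe $\textbf{R}(\mathcal{A})$ and the two vectors $c$ and $\mathcal{A}\cdot c^{t-1}$ under the hypothesis. By Eq.~(\ref{Eigen}), $(\mathcal{A}\cdot y^{t-1})_i$ equals $(\widetilde{\mathcal{A}}\cdot\tilde{y}^{\,t-1})_i$ for $i\le r$ and $0$ for $i>r$, where $\tilde{y}=(y_1,\dots,y_l)^{\mathrm{T}}$; hence $\textbf{R}(\mathcal{A})$ consists of those $b\in\mathbb{C}^m$ whose last $m-r$ coordinates vanish and whose first $r$ coordinates form a vector $w\in\textbf{R}(\widetilde{\mathcal{A}})$. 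Fix such a $b$. Since $b^{[\frac{1}{s}]}$ vanishes on the coordinates $r+1,\dots,m$, every surviving term of $c_i=\sum_{j_2,\dots,j_k\in[m]}g_{ij_2\cdots j_k}(b^{[\frac{1}{s}]})_{j_2}\cdots(b^{[\frac{1}{s}]})_{j_k}$ has $j_2,\dots,j_k\in[r]$; so $c$ depends only on the entries $g_{ij_2\cdots j_k}$ with $j_2,\dots,j_k\in[r]$, and for $i\le l$ it equals $(\widetilde{\mathcal{G}}\cdot(w^{[\frac{1}{s}]})^{k-1})_i$, where $\widetilde{\mathcal{G}}\in\mathbb{C}_k^{l,r}$ is the indicated subtensor of $\mathcal{G}$. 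The zero pattern of $\mathcal{A}$ then gives $(\mathcal{A}\cdot c^{t-1})_i=(\widetilde{\mathcal{A}}\cdot\tilde{c}^{\,t-1})_i$ for $i\le r$ and $0$ for $i>r$, with $\tilde{c}=(c_1,\dots,c_l)^{\mathrm{T}}$. Therefore the equation $\mathcal{A}\cdot c^{t-1}=b$ is equivalent to $\widetilde{\mathcal{A}}\cdot\tilde{c}^{\,t-1}=w$.

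Combining the two steps, $\mathcal{G}\in\mathcal{A}\{1\}_k$ holds precisely when, for every $w\in\textbf{R}(\widetilde{\mathcal{A}})$, the vector $\widetilde{\mathcal{G}}\cdot(w^{[\frac{1}{s}]})^{k-1}$ solves $\widetilde{\mathcal{A}}\cdot x^{t-1}=w$, and by the equivalence of the first step applied now to $\widetilde{\mathcal{A}}$ this is exactly the statement $\widetilde{\mathcal{G}}\in\widetilde{\mathcal{A}}\{1\}_k$. Since no entry of $\mathcal{G}$ outside the block $\widetilde{\mathcal{G}}$ ever entered these equivalences, those entries are unconstrained, which is the assertion of the theorem (here $\mathcal{A}_1$ in the statement is $\widetilde{\mathcal{A}}$). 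The one place that needs care — and the main obstacle I anticipate — is the multi-index bookkeeping for the general product: one must check that the zero pattern on the first index of $\mathcal{A}$ together with the vanishing of $b^{[\frac{1}{s}]}$ off $[r]$ genuinely confines the contractions defining $c$ and $\mathcal{A}\cdot c^{t-1}$ to the ranges $[r]$ and $[l]$, so that the restricted computation reproduces the defining identity for $\widetilde{\mathcal{A}}$ and $\widetilde{\mathcal{G}}$; once that is verified the rest is routine.
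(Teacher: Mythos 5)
Your proof is correct and takes essentially the same route as the paper's: both arguments reduce the defining identity of the $\{1\}$ inverse to the block pair $\widetilde{\mathcal{A}},\widetilde{\mathcal{G}}$ by tracking how the zero pattern of $\mathcal{A}$ and the vanishing of $(\mathcal{A}\cdot y^{t-1})^{[\frac{1}{s}]}$ beyond index $r$ confine every contraction to the ranges $[r]$ and $[l]$, so that the remaining entries of $\mathcal{G}$ never appear. The only organizational difference is that you contract $\mathcal{G}$ with $b^{[\frac{1}{s}]}$ first via associativity of the general product, whereas the paper computes the entries of $\mathcal{A}\mathcal{G}$ and then contracts; your phrasing also makes the ``if and only if'' direction (hence the arbitrariness of the off-block entries) explicit.
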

\begin{proof}
Let $\mathcal{G}=(g_{i_1\cdots i_k})\in\mathbb{C}_{k}^{n,m}$ be an order $k$ {$\{1\}$ inverse} of $\mathcal{A}$. $\widetilde{\mathcal{G}}\in\mathbb{C}_{k}^{l,r}$ denotes the subtensor of $\mathcal{G}$. Let $y=\left(
                                                                                                           \begin{array}{c}
                                                                                                             Y_1 \\
                                                                                                             Y_2 \\
                                                                                                           \end{array}
                                                                                                         \right)
\in\mathbb{C}^n$ be an arbitrary vector, where $Y_1=(y_1, \ldots, y_l)^\mathrm{T}$ and $Y_2=(y_{l+1}, \ldots, y_n)^\mathrm{T}$.

By the general tensor product, it yields that the $i$-th component of vector $\mathcal{A}\cdot y^{t-1}$ is
$$
(\mathcal{A}\cdot y^{t-1})_i = \left\{ {\begin{array}{*{20}{c}}
  \sum\limits_{{i_2}, \ldots, {i_t} \in [l]} {a_{ii_2\cdots i_t}y_{i_2}\cdots y_{i_t}}= (\widetilde{\mathcal{A}}\cdot Y_1^{t-1})_i,~if ~i\leq r;\\
  0, ~if ~i>r.~~~~~~~~~~~~~~~~~~~~~~~~~~~~~~~~\\
\end{array}} \right.
$$
That is
$\mathcal{A}\cdot y^{t-1}=\left(
  \begin{array}{c}
    {\widetilde{\mathcal{A}}\cdot Y_1^{t-1} } \\
    0 \\
  \end{array}
\right)
 $.
By computing, we get
 $$(\mathcal{AG})_{i\alpha_1\cdots \alpha_{t-1}}=\sum\limits_{i_2,\ldots,i_t\in[l]}a_{ii_2\cdots i_t}g_{i_2\alpha_1}\cdots g_{i_t\alpha_{t-1}}
 =(\widetilde{\mathcal{A}}\widetilde{\mathcal{G}})_{i\alpha_1\cdots \alpha_{t-1}},$$
 if all the indices  in $i$, $\alpha_1,\dots, \alpha_{t-1}$ are less than or equal to $r$; $(\mathcal{AG})_{i\alpha_1\cdots \alpha_{t-1}}=0$ if $i>r$.

Let $z=(\mathcal{A}\cdot y^{t-1})^{[\frac{1}{s}]}
=
\left( {\begin{array}{*{20}c}
   {(\widetilde{\mathcal{A}}\cdot Y_1^{t-1})^{[\frac{1}{s}]} }  \\
   0  \\
\end{array}} \right)
$, where $s=(t-1)(k-1)$.
By Eq.(\ref{Eigen}), it yields that
\[(\mathcal{AG}\cdot z^s)_i= \left\{ {\begin{array}{*{20}{c}}
   (\widetilde{\mathcal{A}}\widetilde{\mathcal{G}}\cdot [(\widetilde{\mathcal{A}}\cdot Y_1^{t-1})^{[\frac{1}{s}]}]^s)_i,~if~i\leq r;  \\
  0,~if~i>r.~~~~~~~~~~~~~~~~ \\
\end{array}} \right.\]
That is $\mathcal{A}\mathcal{G}\cdot z^s=
 \left( {\begin{array}{*{20}c}
   {\widetilde{\mathcal{A}}\widetilde{\mathcal{G}}\cdot[(\widetilde{\mathcal{A}}\cdot Y_1^{t-1})^{[\frac{1}{s}]}]^s}  \\
   0  \\
\end{array}} \right)
$.
Since $\mathcal{G}$ is an order $k$ {\{1\} inverse} of $\mathcal{A}$, it yields that $\mathcal{A}\mathcal{G}\cdot z^s=\mathcal{A}\cdot y^{t-1}$,
 so $\widetilde{\mathcal{A}}\widetilde{\mathcal{G}}\cdot [(\widetilde{\mathcal{A}}\cdot Y_1^{t-1})^{[\frac{1}{s}]}]^s =\widetilde{\mathcal{A}}\cdot Y_1^{t-1} $.
Thus, we get $\widetilde{\mathcal{G}}$ is an order $k$ {$\{1\}$ inverse} of $\widetilde{\mathcal{A}}$ and all the other entries of $\mathcal{G}$ are arbitrary.
\end{proof}
 When the tensor $\mathcal{A}$ in Theorem \ref{T 1} is an order $2$ tensor, the following result can be gotten.
\begin{cor}
Let the block matrix $A=\left( {\begin{array}{*{20}c}
   {A_1 } & 0  \\
   0 & 0  \\
\end{array}} \right) \in \mathbb{C}^{m\times n}$ and $A_1 \in \mathbb{C}^{r\times l}$. Then
\[
A\{ 1\}  = \left\{ {\left. {\left( {\begin{array}{*{20}{c}}
   {W} & X  \\
   Y & Z  \\
\end{array}} \right)}\in \mathbb{C}^{n\times m} \right|} \right.\left. \begin{array}{l}
 W \in A_1\{ 1\}, ~
 X,~Y~and~Z~are~proper~\\matrices~with~arbitrary~entries \\
 \end{array} \right\}.\]

\end{cor}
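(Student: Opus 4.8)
The plan is to derive this statement as the $t=k=2$ specialization of Theorem~\ref{T 1}. First I would observe that for order~$2$ tensors everything in Definition~\ref{defi [1]} collapses to the familiar matrix notions: a tensor in $\mathbb{C}_2^{p,q}$ is a $p\times q$ matrix, the general tensor product is ordinary matrix multiplication, and $s=(t-1)(k-1)=1$, so $b^{[\frac{1}{s}]}=b$ and the exponent $[\,\cdot\,]^{s}$ is trivial. Hence the defining identity $\mathcal{A}\mathcal{X}\cdot[(\mathcal{A}\cdot y^{t-1})^{[\frac{1}{s}]}]^{s}=\mathcal{A}\cdot y^{t-1}$ for all $y$ becomes $AXAy=Ay$ for every $y\in\mathbb{C}^{n}$, i.e. $AXA=A$; therefore $A\{1\}_2$ is exactly the classical set $A\{1\}$ of $\{1\}$-inverses of $A$, and similarly $A_1\{1\}_2=A_1\{1\}$.

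Next I would match hypotheses. The block matrix $A=\left(\begin{smallmatrix}A_1&0\\0&0\end{smallmatrix}\right)\in\mathbb{C}^{m\times n}$ is precisely a matrix all of whose entries vanish outside the submatrix $A_1\in\mathbb{C}^{r\times l}$ occupying rows $1,\dots,r$ and columns $1,\dots,l$, so $A_1$ plays the role of $\widetilde{\mathcal{A}}$ in Theorem~\ref{T 1} and $A$ satisfies its hypothesis. Writing an arbitrary candidate $G\in\mathbb{C}^{n\times m}$ in the conformal block form $G=\left(\begin{smallmatrix}W&X\\Y&Z\end{smallmatrix}\right)$ with $W\in\mathbb{C}^{l\times r}$, the block $W$ is exactly the subtensor $\widetilde{G}$ of $G$. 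Theorem~\ref{T 1} then tells us that $G\in A\{1\}$ if and only if $W\in A_1\{1\}$, while $X,Y,Z$ are unconstrained, which is precisely the asserted description of $A\{1\}$.

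The only point requiring a word of care is the converse inclusion, since Theorem~\ref{T 1} is phrased as ``every $\{1\}$-inverse has the stated block shape''. That every matrix of the stated shape is in fact a $\{1\}$-inverse follows by reading the chain of equalities in the proof of Theorem~\ref{T 1} backwards: the blocks $X,Y,Z$ never enter the computation of $AG\cdot z^{s}$, and as $y$ ranges over $\mathbb{C}^{n}$ the truncation $Y_1$ ranges over all of $\mathbb{C}^{l}$, so ``$AG\cdot z^{s}=Ay$ for all $y$'' is equivalent to ``$W\in A_1\{1\}$''. Consequently there is no genuine obstacle here; I would present the corollary in one or two lines as a direct transcription of Theorem~\ref{T 1} into matrix language, using $t=k=2$.
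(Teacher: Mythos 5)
Your proposal is correct and matches the paper's intent exactly: the corollary is presented there as the immediate $t=k=2$ specialization of Theorem~\ref{T 1}, with the general tensor product reducing to matrix multiplication and $s=1$ so that the defining identity becomes $AXA=A$. Your extra remark on the converse inclusion (that any matrix of the stated block shape is a $\{1\}$-inverse, since $X$, $Y$, $Z$ never enter the computation) is a sensible precaution but does not change the route.
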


The tensor $\mathcal{A}\in\mathbb{C}_{t}^{n,n}$ is a diagonal block tensor as
\begin{eqnarray}\label{block 4}
\mathcal{A}={\rm diag}(\mathcal{A}_1, \mathcal{A}_2),
\end{eqnarray}
where $\mathcal{A}_1=(a_{i_1\cdots i_t})$ ($i_1,\ldots, i_t\leq  r\leq n$, $r$ is a positive integer);
$\mathcal{A}_2=(a_{i_1\cdots i_t})$ ($i_1,\ldots, i_t>r$). And the other entries of $\mathcal{A}$ are all zero (see \cite{Hu}).

\begin{thm}\label{diag 1}
Let $\mathcal{A}$ be the form as in {\rm(\ref {block 4})}. Then ${\rm diag}(\mathcal{A}_1^{(1)_k}, \mathcal{A}_2^{(1)_k})\in\mathbb{C}_{k}^{n,n}$ is an order $k$ {\rm \{1\}} inverse of $\mathcal{A}$.
\end{thm}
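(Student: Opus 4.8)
The plan is to verify the defining identity of Definition~\ref{defi [1]} directly, propagating the block structure through every step. Set $\mathcal{G}:={\rm diag}(\mathcal{A}_1^{(1)_k},\mathcal{A}_2^{(1)_k})\in\mathbb{C}_{k}^{n,n}$ and $s=(t-1)(k-1)$, and let $y=\binom{Y_1}{Y_2}\in\mathbb{C}^n$ be arbitrary, with $Y_1=(y_1,\dots,y_r)^{\mathrm T}$ and $Y_2=(y_{r+1},\dots,y_n)^{\mathrm T}$. Exactly as in the proof of Theorem~\ref{T 1}, formula~(\ref{Eigen}) together with the vanishing of every entry $a_{ii_2\cdots i_t}$ that does not have all indices $\le r$ or all indices $>r$ shows $\mathcal{A}\cdot y^{t-1}=\binom{\mathcal{A}_1\cdot Y_1^{t-1}}{\mathcal{A}_2\cdot Y_2^{t-1}}$, hence $z:=(\mathcal{A}\cdot y^{t-1})^{[\frac1s]}=\binom{(\mathcal{A}_1\cdot Y_1^{t-1})^{[\frac1s]}}{(\mathcal{A}_2\cdot Y_2^{t-1})^{[\frac1s]}}$.

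The next step is to prove that $\mathcal{A}\mathcal{G}$ is again block diagonal, namely $\mathcal{A}\mathcal{G}={\rm diag}(\mathcal{A}_1\mathcal{A}_1^{(1)_k},\mathcal{A}_2\mathcal{A}_2^{(1)_k})\in\mathbb{C}_{s+1}^{n,n}$. From~(\ref{product}), in $(\mathcal{A}\mathcal{G})_{i\alpha_1\cdots\alpha_{t-1}}=\sum_{i_2,\dots,i_t}a_{ii_2\cdots i_t}g_{i_2\alpha_1}\cdots g_{i_t\alpha_{t-1}}$ with $i\le r$, the factor $a_{ii_2\cdots i_t}$ is nonzero only when $i_2,\dots,i_t\le r$, and then each $g_{i_j\alpha_{j-1}}$ --- an entry of the block-diagonal tensor $\mathcal{G}$ whose first index is $\le r$ --- is nonzero only when every component of the $(k-1)$-tuple $\alpha_{j-1}$ is $\le r$; the surviving terms are precisely those defining $(\mathcal{A}_1\mathcal{A}_1^{(1)_k})_{i\alpha_1\cdots\alpha_{t-1}}$, while all entries of $\mathcal{A}\mathcal{G}$ with $i\le r$ and some index of $\alpha_1,\dots,\alpha_{t-1}$ exceeding $r$ vanish. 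The same reasoning covers $i>r$, and all mixed entries are zero.

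Finally, apply~(\ref{Eigen}) to the order-$(s+1)$ tensor $\mathcal{A}\mathcal{G}$. Its block-diagonal form, combined with the block form of $z$, makes the contraction split: $\mathcal{A}\mathcal{G}\cdot z^{s}=\binom{\mathcal{A}_1\mathcal{A}_1^{(1)_k}\cdot[(\mathcal{A}_1\cdot Y_1^{t-1})^{[\frac1s]}]^{s}}{\mathcal{A}_2\mathcal{A}_2^{(1)_k}\cdot[(\mathcal{A}_2\cdot Y_2^{t-1})^{[\frac1s]}]^{s}}$. Since $\mathcal{A}_1\in\mathbb{C}_t^{r,r}$ and $\mathcal{A}_2\in\mathbb{C}_t^{n-r,n-r}$ have the same orders as $\mathcal{A}$, the parameter attached to each block is again $(t-1)(k-1)=s$, so Definition~\ref{defi [1]} applied to $\mathcal{A}_1$ and to $\mathcal{A}_2$ shows that each block of the right-hand side equals $\mathcal{A}_j\cdot Y_j^{t-1}$; therefore $\mathcal{A}\mathcal{G}\cdot z^{s}=\binom{\mathcal{A}_1\cdot Y_1^{t-1}}{\mathcal{A}_2\cdot Y_2^{t-1}}=\mathcal{A}\cdot y^{t-1}$. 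As $y$ was arbitrary, this is exactly the condition of Definition~\ref{defi [1]}, so $\mathcal{G}$ is an order $k$ \{1\} inverse of $\mathcal{A}$.

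I expect the block-product computation in the second step to be the only real obstacle: one must track how the general tensor product raises the orders $t$ and $k$ to $(t-1)(k-1)+1$ and organizes the lower indices into $(k-1)$-tuples, and check that block-diagonality survives this. Once $\mathcal{A}\mathcal{G}$ is seen to be block diagonal, the rest is a mechanical reduction to the defining property of $\mathcal{A}_1^{(1)_k}$ and $\mathcal{A}_2^{(1)_k}$. It is also worth remarking that the entrywise identity $\big[w^{[\frac1s]}\big]^{s}=w$ holds for any choice of the $s$-th roots, so the (generally non-unique) operation $(\cdot)^{[\frac1s]}$ creates no ambiguity in the argument.
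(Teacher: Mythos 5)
Your proof is correct and follows essentially the same route as the paper's: split $\mathcal{A}\cdot y^{t-1}$ and $\mathcal{A}\mathcal{G}$ into blocks using the block-diagonal structure, then invoke the defining identity of the order $k$ \{1\} inverse for $\mathcal{A}_1$ and $\mathcal{A}_2$ separately. Your index-tracking argument for why $\mathcal{A}\mathcal{G}$ remains block diagonal is in fact a bit more explicit than the paper's, which simply asserts this from Eq.~(\ref{product}).
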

\begin{proof}
Let $\mathcal{G}_1=\mathcal{A}_1^{(1)_k}$, $\mathcal{G}_2=\mathcal{A}_2^{(1)_k}$ and $\mathcal{G}={\rm diag}(\mathcal{G}_1, \mathcal{G}_2)$. And $y=\left(
                                                                                                           \begin{array}{c}
                                                                                                             Y_1 \\
                                                                                                             Y_2 \\
                                                                                                           \end{array}
                                                                                                         \right)
\in\mathbb{C}^n$ is an arbitrary vector, where $Y_1=(y_1, \ldots, y_r)^\mathrm{T}$ and $Y_2=(y_{r+1}, \ldots, y_n)^\mathrm{T}$.

By calculating, it yields that
\[(\mathcal{A}\cdot y^{t-1})_i = \left\{ {\begin{array}{*{20}{c}}
   (\mathcal{A}_1\cdot Y_1^{t-1})_i,~if~i\leq r;  \\
   (\mathcal{A}_2\cdot Y_2^{t-1})_i,~if~i>r.  \\
\end{array}} \right.\]
That is $\mathcal{A}\cdot y^{t-1}=\left(
                        \begin{array}{c}
                          {\mathcal{A}_1\cdot Y_1^{t-1}} \\
                          {\mathcal{A}_2\cdot Y_2^{t-1}} \\
                        \end{array}
                      \right)
$.
It follows from Eq.(\ref{product}) that:
if all the indices  in $i$, $\alpha_1,\ldots, \alpha_{t-1}$ are less than or equal to $r$, then
$$(\mathcal{AG})_{i\alpha_1\cdots \alpha_{t-1}} =(\mathcal{A}_1\mathcal{G}_1)_{i\alpha_1\cdots \alpha_{t-1}};$$
if all the indices  in $i$, $\alpha_1,\ldots, \alpha_{t-1}$ are greater than $r$, then
$$(\mathcal{AG})_{i\alpha_1\cdots \alpha_{t-1}} =(\mathcal{A}_2\mathcal{G}_2)_{i\alpha_1\cdots \alpha_{t-1}};$$
the other entries of $\mathcal{AG}$ are all zero.
It implies that $\mathcal{AG}={\rm diag}(\mathcal{A}_1\mathcal{G}_1, \mathcal{A}_2\mathcal{G}_2)$.

Let $z_1=\mathcal{A}_1\cdot (Y_1^{[\frac{1}{s}]})^{t-1}$, $z_2=\mathcal{A}_2\cdot (Y_2^{[\frac{1}{s}]})^{t-1}$ and
$z=\left(
          \begin{array}{c}
            {z_1} \\
            z_2 \\
          \end{array}
        \right)
=(\mathcal{A}\cdot y^{t-1})^{[\frac{1}{s}]}$, where $s=(t-1)(k-1)$.
By Eq.(\ref{Eigen}), we have
\[(\mathcal{AG}\cdot z^s)_i = \left\{ {\begin{array}{*{20}{c}}
   (\mathcal{A}_1\mathcal{G}_1\cdot z_1^s)_i=(\mathcal{A}_1\mathcal{G}_1\cdot [(\mathcal{A}_1\cdot Y_1^{t-1})^{[\frac{1}{s}]}]^s)_i=(\mathcal{A}_1\cdot Y_1^{t-1})_i,~if~i\leq r; \\
   (\mathcal{A}_2\mathcal{G}_2\cdot z_2^s)_i=(\mathcal{A}_2\mathcal{G}_2\cdot [(\mathcal{A}_2\cdot Y_2^{t-1})^{[\frac{1}{s}]}]^s)_i=(\mathcal{A}_2\cdot Y_2^{t-1})_i,~if~i>r. \\
\end{array}} \right.\]
That is $\mathcal{AG}\cdot z^s=\left(
                         \begin{array}{c}
                          \mathcal{A}_1\cdot Y_1^{t-1}\\
                           \mathcal{A}_2\cdot Y_2^{t-1}\\
                         \end{array}
                       \right)
$.
Thus, we get $\mathcal{AG}\cdot z^s=\mathcal{A}\cdot y^{t-1}$, so $\mathcal{G}$ is an order $k$ \{1\} inverse of $\mathcal{A}$.
\end{proof}

We partition a tensor $\mathcal{A}\in\mathbb{C}_{t}^{m,n}$ into the ※row blocks§ as
\begin{eqnarray}\label{block 2}
\mathcal{A}=\left(
  \begin{array}{c}
    \mathcal{A}_1\\
     \mathcal{A}_2 \\
  \end{array}
\right),
\end{eqnarray}
where $\mathcal{A}_1=(a_{i_1\cdots i_t})$ ($i_1\leq r$); $\mathcal{A}_2=(a_{i_1\cdots i_t})$ ($i_1> r$, $r\leq m$, $r$ is a positive integer).
And we also can partition $\mathcal{A}$ into the ※column blocks§ as
\begin{eqnarray}\label{block 3}
\mathcal{A}=\left(
  \begin{array}{cc}
    \mathcal{A}_1 & \mathcal{A}_2 \\
  \end{array}
\right),
\end{eqnarray}
where
$\mathcal{A}_1=(a_{i_1\cdots i_t})$ ($i_2,\ldots,i_t \leq r\leq n$); $\mathcal{A}_2=(a_{i_1\cdots i_t})$ otherwise (see \cite{Bu 2014}).

\begin{thm}\label{T 2}
{\rm{(1)}} Let $\mathcal{A}$ be the form as in {\rm(\ref {block 2})} and ${\cal A}_1\{1\}_k$ be the set of all the order k {\rm \{1\}} inverses of $\mathcal{A}_1$. If $\mathcal{A}_2=0$, then
\[\mathcal{A}{\{ 1\} _k} = \left\{ {\left( {\begin{array}{*{20}{c}}
   \mathcal{W} & \mathcal{X}  \\
\end{array}} \right)
 \in \mathbb{C} _k^{n,m}\left| {\begin{array}{*{20}{c}}
   {\mathcal{W} \in {\mathcal{A}_1}{{\{ 1\} }_k},~\mathcal{X}~is~a~proper}  \\
   {tensor~with~arbitrary~entries}  \\
\end{array}} \right.} \right\};\]

{\rm{(2)}} Let $\mathcal{A}$ be the form as in {\rm(\ref {block 3})} and ${\cal A}_1\{1\}_k$ be the set of all the order k {\rm \{1\}} inverses of $\mathcal{A}_1$. If $\mathcal{A}_2=0$, then
\[
{\cal A}{\{ 1\} _k} = \left\{ {\left( {\begin{array}{*{20}{c}}
   {{\cal W}} \\
   {\cal Y}  \\
\end{array}} \right)\in \mathbb{C}_k^{n,m}} \right.\left. {\left| \begin{array}{l}
~ \mathcal{W}\in {\mathcal{A}_1}{\{ 1\} _k},~{\cal Y}~is~a~proper~tensor \\
~whose~entries~are~arbitrary \\
 \end{array} \right.} \right\}.\]

\end{thm}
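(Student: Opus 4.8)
The plan is to prove both parts by the same two-step reduction, in the spirit of the proofs of Theorems \ref{T 1} and \ref{diag 1}: first express $\mathcal{A}\cdot y^{t-1}$, its $[1/s]$-power, and the general product $\mathcal{A}\mathcal{G}$ in block form via (\ref{Eigen}) and (\ref{product}); then read off that the identity of Definition \ref{defi [1]} for the pair $(\mathcal{A},\mathcal{G})$ collapses to the same identity for the pair $(\mathcal{A}_1,\mathcal{W})$, while the remaining block of $\mathcal{G}$ is unconstrained.

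For part (1), with $\mathcal{A}=\left(\begin{array}{c}\mathcal{A}_1\\0\end{array}\right)$, I would write a candidate $\mathcal{G}\in\mathbb{C}_k^{n,m}$ as $\mathcal{G}=\left(\begin{array}{cc}\mathcal{W}&\mathcal{X}\end{array}\right)$ with $\mathcal{W}\in\mathbb{C}_k^{n,r}$ the subtensor of $\mathcal{G}$ on the non-first indices $\le r$. As in Theorem \ref{diag 1}, $\mathcal{A}\cdot y^{t-1}=\left(\begin{array}{c}\mathcal{A}_1\cdot y^{t-1}\\0\end{array}\right)$, so $z:=(\mathcal{A}\cdot y^{t-1})^{[1/s]}=\left(\begin{array}{c}(\mathcal{A}_1\cdot y^{t-1})^{[1/s]}\\0\end{array}\right)$ (here one uses $0^{1/s}=0$). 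Formula (\ref{product}) gives $(\mathcal{A}\mathcal{G})_{i\alpha_1\cdots\alpha_{t-1}}=0$ for $i>r$, and $(\mathcal{A}\mathcal{G})_{i\alpha_1\cdots\alpha_{t-1}}=(\mathcal{A}_1\mathcal{W})_{i\alpha_1\cdots\alpha_{t-1}}$ whenever $i\le r$ and every entry of $\alpha_1,\dots,\alpha_{t-1}$ is at most $r$; the other entries of $\mathcal{A}\mathcal{G}$ do not affect $\mathcal{A}\mathcal{G}\cdot z^s$ since $z$ vanishes in positions $r+1,\dots,m$. Contracting, $\mathcal{A}\mathcal{G}\cdot z^s=\left(\begin{array}{c}\mathcal{A}_1\mathcal{W}\cdot[(\mathcal{A}_1\cdot y^{t-1})^{[1/s]}]^s\\0\end{array}\right)$, and comparing this with $\mathcal{A}\cdot y^{t-1}$ shows the defining identity holds for $\mathcal{G}$ and all $y\in\mathbb{C}^n$ exactly when $\mathcal{A}_1\mathcal{W}\cdot[(\mathcal{A}_1\cdot y^{t-1})^{[1/s]}]^s=\mathcal{A}_1\cdot y^{t-1}$ for all $y$, i.e. when $\mathcal{W}\in\mathcal{A}_1\{1\}_k$, with $\mathcal{X}$ free. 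That is the claimed description of $\mathcal{A}\{1\}_k$.

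Part (2) is the transposed computation. For $\mathcal{A}=\left(\begin{array}{cc}\mathcal{A}_1&\mathcal{A}_2\end{array}\right)$ with $\mathcal{A}_2=0$, the entry $a_{ii_2\cdots i_t}$ vanishes unless $i_2,\dots,i_t\le r$; so, writing $y=\left(\begin{array}{c}Y_1\\Y_2\end{array}\right)$ with $Y_1\in\mathbb{C}^r$, one gets $\mathcal{A}\cdot y^{t-1}=\mathcal{A}_1\cdot Y_1^{t-1}$, and writing $\mathcal{G}=\left(\begin{array}{c}\mathcal{W}\\\mathcal{Y}\end{array}\right)$ with $\mathcal{W}\in\mathbb{C}_k^{r,m}$ the subtensor of $\mathcal{G}$ on first index $\le r$, the same vanishing in (\ref{product}) restricts the inner sum over $i_2,\dots,i_t$ to indices $\le r$ and yields $\mathcal{A}\mathcal{G}=\mathcal{A}_1\mathcal{W}$ identically, the block $\mathcal{Y}$ never appearing. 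Hence $\mathcal{A}\mathcal{G}\cdot[(\mathcal{A}\cdot y^{t-1})^{[1/s]}]^s=\mathcal{A}_1\mathcal{W}\cdot[(\mathcal{A}_1\cdot Y_1^{t-1})^{[1/s]}]^s$, and since $Y_1$ runs over all of $\mathbb{C}^r$ as $y$ runs over $\mathbb{C}^n$, the identity for $\mathcal{G}$ is equivalent to $\mathcal{W}\in\mathcal{A}_1\{1\}_k$ with $\mathcal{Y}$ arbitrary, which is the assertion.

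I do not expect a genuine obstacle here; the only delicate point is the multi-index bookkeeping in the general product — namely verifying that the contraction $\mathcal{A}\mathcal{G}\cdot z^s$ in part (1) and the inner sum defining $(\mathcal{A}\mathcal{G})_{i\alpha_1\cdots\alpha_{t-1}}$ in part (2) really see only the block $\mathcal{W}$ of $\mathcal{G}$, because $z$ (resp. $\mathcal{A}$) is supported on the first $r$ coordinates in precisely the slots being summed — together with the small but necessary remark that $0^{1/s}=0$, so that the $[1/s]$-operation preserves the block-zero pattern of $\mathcal{A}\cdot y^{t-1}$.
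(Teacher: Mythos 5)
Your argument is correct and is essentially the paper's own proof: the same block decompositions of $\mathcal{A}\cdot y^{t-1}$, of $z=(\mathcal{A}\cdot y^{t-1})^{[1/s]}$, and of $\mathcal{A}\mathcal{G}$ via Eqs.~(\ref{Eigen}) and (\ref{product}), reducing the defining identity for $(\mathcal{A},\mathcal{G})$ to the one for $(\mathcal{A}_1,\mathcal{W})$ with the remaining block unconstrained. If anything, your phrasing as an equivalence is slightly cleaner, since it makes both inclusions explicit where the paper writes out only the forward direction.
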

\begin{proof}
(1) Let the ※column blocks§ tensor $\mathcal{G}=(\mathcal{G}_1, \mathcal{G}_2)\in \mathbb{C}_k^{n,m}$ be an order $k$ {$\{1\}$ inverse} of $\mathcal{A}$, where $\mathcal{G}_1\in \mathbb{C}_k^{n,r}$. And $y=\left(
                                                                                                           \begin{array}{c}
                                                                                                             Y_1 \\
                                                                                                             Y_2 \\
                                                                                                           \end{array}
                                                                                                         \right)
\in\mathbb{C}^n$ is an arbitrary vector, where $Y_1=(y_1, \ldots, y_r)^\mathrm{T}$ and $Y_2=(y_{r+1}, \ldots, y_n)^\mathrm{T}$.

By calculating, it yields that the $i$-th component of vector $\mathcal{A}\cdot y^{t-1}$ is
\[
(\mathcal{A}\cdot y^{t-1})_i = \left\{ {\begin{array}{*{20}{c}}
   \sum\limits_{{i_2}, \ldots, {i_t} \in [n]} {a_{ii_2\cdots i_t}y_{i_2}y_{i_3}\cdots y_{i_t}}= (\mathcal{A}_1\cdot y^{t-1})_i,~if~i\leq r; \\
   (\mathcal{A}\cdot y^{t-1})_i=0,~if~i>r.~~~~~~~~~~~~~~~~~~~~~~~~~~  \\
\end{array}} \right.
\]
That is $\mathcal{A}\cdot y^{t-1}=\left(
                        \begin{array}{c}
                          {\mathcal{A}_1\cdot y^{t-1}} \\
                          {0}\\
                        \end{array}
                      \right)$.
It follows from the general tensor product that
\[
(\mathcal{AG})_{i\alpha_1\cdots \alpha_{t-1}} = \left\{ {\begin{array}{*{20}{c}}
  \sum\limits_{i_2,\ldots,i_t\in[n]}a_{ii_2\cdots i_t}g_{i_2\alpha_1}\cdots g_{i_t\alpha_{t-1}}
 =(\mathcal{A}_1\mathcal{G})_{i\alpha_1\cdots \alpha_{t-1}},~if~i\leq r; \\
   0,~if~i>r.~~~~~~~~~~~~~~~~~~~~~~~~~~~~~~~~~~~~~~~~~~~~~~~~~  \\
\end{array}} \right.
\]
It implies that $\mathcal{AG}=\left(
                                \begin{array}{c}
                                  {\mathcal{A}_1\mathcal{G}} \\
                                  {0}\\
                                \end{array}
                              \right)$.
Let $z=(\mathcal{A}\cdot y^{t-1})^{[\frac{1}{s}]}=\left(
                        \begin{array}{c}
                          {(\mathcal{A}_1\cdot y^{t-1})^{[\frac{1}{s}]}} \\
                          {0}\\
                        \end{array}
                      \right)$,  where $s=(t-1)(k-1)$.
By calculating, we have
\[
(\mathcal{AG}\cdot z^s)_i = \left\{ {\begin{array}{*{20}{c}}
   (\mathcal{A}_1\mathcal{G}_1\cdot [(\mathcal{A}_1\cdot y^{t-1})^{[\frac{1}{s}]}]^s)_i,~if~i\leq r;  \\
   0,~if~i>r.~~~~~~~~~~~~~~~~~  \\
\end{array}} \right.
\]
Since $\mathcal{G}$ is a {$\{1\}$ inverse} of $\mathcal{A}$, that is $\mathcal{AG}\cdot z^s=\mathcal{A}\cdot y^{t-1}$,
 so $\mathcal{A}_1\mathcal{G}_1\cdot [(\mathcal{A}_1\cdot y^{t-1})^{[\frac{1}{s}]}]^s =\mathcal{A}_1\cdot y^{t-1} $.
Thus, we get $\mathcal{G}_1$ is a $\{1\}$ inverse of $\mathcal{A}_1$ and $\mathcal{G}_2$ is arbitrary.

(2) Let the ※row blocks§ tensor $\mathcal{G}=\left(
                       \begin{array}{c}
                         {\mathcal{G}_1}\\
                         {\mathcal{G}_2} \\
                       \end{array}
                     \right)\in \mathbb{C}_k^{n,m}$
                     be an order $k$ \textit{$\{1\}$ inverse} of $\mathcal{A}$, where $\mathcal{G}_1\in \mathbb{C}_k^{r,m}$.
And $y=\left(
                                                                                                           \begin{array}{c}
                                                                                                             Y_1 \\
                                                                                                             Y_2 \\
                                                                                                           \end{array}
                                                                                                         \right)
\in\mathbb{C}^n$ is an arbitrary vector, where $Y_1=(y_1, \ldots, y_r)^\mathrm{T}$ and $Y_2=(y_{r+1}, \ldots, y_n)^\mathrm{T}$.

By calculation, it yields that
\begin{eqnarray*}
(\mathcal{A}\cdot y^{t-1})_i&=&\sum\limits_{{i_2} \cdots {i_t} \in [n]^{t-1}} {a_{ii_2\cdots i_t}y_{i_2}y_{i_3}\cdots y_{i_t}}\\
&~&+\sum\limits_{{i_2} \cdots {i_t}  \notin [n]^{t-1}} {a_{ii_2\cdots i_t}y_{i_2}y_{i_3}\cdots y_{i_t}}\\
&=&(\mathcal{A}_1\cdot Y_1^{t-1}+0)_i.
\end{eqnarray*}

It follows from the general tensor product that
\begin{eqnarray*}
 (\mathcal{AG})_{i\alpha_1\cdots \alpha_{t-1}}&=&\sum\limits_{i_2 \cdots i_t\in[n]^{t-1}}a_{ii_2\cdots i_t}g_{i_2\alpha_1}\cdots g_{i_t\alpha_{t-1}}\\
 &~&+\sum\limits_{i_2 \cdots i_t\notin[n]^{t-1} }a_{ii_2\cdots i_t}g_{i_2\alpha_1}\cdots g_{i_t\alpha_{t-1}}\\
 &=&(\mathcal{A}_1\mathcal{G}_1+0)_{i\alpha_1\cdots \alpha_{t-1}}.
 \end{eqnarray*}

Let $z=(\mathcal{A}\cdot y^{t-1})^{[\frac{1}{s}]}=(\mathcal{A}_1\cdot Y_1^{t-1})^{[\frac{1}{s}]}$, where $s=(t-1)(k-1)$.
Then $(\mathcal{A}\mathcal{G}\cdot z^s)_i=(\mathcal{A}_1\mathcal{G}_1\cdot [(\mathcal{A}_1\cdot Y_1^{t-1})^{[\frac{1}{s}]}]^s)_i$.
Since $\mathcal{G}$ is a {\{1\} inverse} of $\mathcal{A}$, it yields that $\mathcal{A}\mathcal{G}\cdot z^s=\mathcal{A}\cdot y^{t-1}$,
so $\mathcal{A}_1\mathcal{G}_1\cdot [(\mathcal{A}_1\cdot Y_1^{t-1})^{[\frac{1}{s}]}]^s=\mathcal{A}_1\cdot Y_1^{t-1}$.
Thus, we get $\mathcal{G}_1$ is a $\{1\}$ inverse of $\mathcal{A}_1$ and $\mathcal{G}_2$ is arbitrary.
\end{proof}
When the tensor $\mathcal{A}$ in Theorem \ref{T 2} is an order $2$ tensor, the following result can be gotten.
\begin{cor}
{\rm{(1)}} Let the block matrix $
A = \left( {\begin{array}{*{20}c}
   {A_1 } & 0  \\
\end{array}} \right)\in\mathbb{C}^{m\times n}
$ and $A_1\in \mathbb{C}^{m\times r}$. Then
\[
A\{ 1\}  = \left\{ {\left( {\begin{array}{*{20}{c}}
   {W}  \\
   Y  \\
\end{array}} \right)\in \mathbb{C}^{n\times m}} \right.\left. {\left| \begin{array}{l}
 W \in A_1\{ 1\} , ~Y~is~a~proper\\
~matrix~with~arbitrary~entries \\
 \end{array} \right.} \right\};\]

{\rm{(2)}} Let the block matrix $
A = \left( {\begin{array}{*{20}c}
   {A_1 }  \\
   0  \\
\end{array}} \right)\in\mathbb{C}^{m\times n}
$ and $A_1\in \mathbb{C}^{r\times n}$. Then
\[A{\{ 1\} } = \left\{ {\left( {\begin{array}{*{20}{c}}
   W & X  \\
\end{array}} \right)
 \in \mathbb{C}^{n\times m}\left| {\begin{array}{*{20}{c}}
   {W \in {A_1}{{\{ 1\} }},~X~is~a~proper}  \\
   {matrix~with~arbitrary~entries}  \\
\end{array}} \right.} \right\}.\]

\end{cor}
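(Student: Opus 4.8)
The plan is to obtain this corollary directly from Theorem \ref{T 2} by specializing to $t=k=2$, because for matrices the $\{1\}$-inverse of Definition \ref{defi [1]} reduces to the classical one. With $t=k=2$ we have $s=(t-1)(k-1)=1$, so for every $y\in\mathbb{C}^n$ both $(\mathcal{A}\cdot y^{t-1})^{[1/s]}$ and $[(\mathcal{A}\cdot y^{t-1})^{[1/s]}]^{s}$ equal $Ay$, and the defining equation of Definition \ref{defi [1]} becomes $AXAy=Ay$ for all $y$, i.e. $AXA=A$, which is equation (1). Hence $A\{1\}_2$ is exactly the usual set $A\{1\}$ of matrix $\{1\}$-inverses, and likewise $A_1\{1\}_2=A_1\{1\}$.

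Next I would match up the block shapes. In part (1) the matrix $A=(A_1\ \ 0)$ with $A_1\in\mathbb{C}^{m\times r}$ is precisely an order $2$ tensor of the ``column blocks'' form (\ref{block 3}) with $\mathcal{A}_2=0$; indeed, for $t=2$ the index restriction $i_2,\dots,i_t\le r$ just selects the first $r$ columns. So Theorem \ref{T 2}(2) applies and yields $A\{1\}=\{\binom{W}{Y}:W\in A_1\{1\},\ Y\text{ a proper matrix with arbitrary entries}\}$, which is the asserted description once one observes that in the proof of Theorem \ref{T 2}(2) the block $\mathcal{G}_1=W$ lies in $\mathbb{C}^{r\times m}$, the same space that contains $A_1\{1\}$. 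In part (2) the matrix $A=\binom{A_1}{0}$ with $A_1\in\mathbb{C}^{r\times n}$ is an order $2$ tensor of the ``row blocks'' form (\ref{block 2}) with $\mathcal{A}_2=0$, so Theorem \ref{T 2}(1) gives $A\{1\}=\{(W\ \ X):W\in A_1\{1\},\ X\text{ a proper matrix with arbitrary entries}\}$.

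There is essentially no obstacle: the content is inherited from Theorem \ref{T 2}. For completeness I would only record the trivial converse inclusions implicit there, e.g. in part (1), if $W\in A_1\{1\}$ and $Y$ is arbitrary then $A\binom{W}{Y}A=(A_1WA_1\ \ 0)=(A_1\ \ 0)=A$, so $\binom{W}{Y}\in A\{1\}$, and symmetrically in part (2). The one point to watch is the index bookkeeping: part (1) of the corollary corresponds to part (2) of Theorem \ref{T 2}, and part (2) of the corollary to part (1) of Theorem \ref{T 2}.
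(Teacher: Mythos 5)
Your proposal is correct and matches the paper's own route: the paper gives no separate proof, simply remarking that the corollary is Theorem~\ref{T 2} specialized to order~$2$ tensors, which is exactly your reduction (with the observation, already made after Definition~\ref{defi [1]}, that for $t=k=2$, $s=1$ and the tensor $\{1\}$-inverse becomes the classical matrix $\{1\}$-inverse $AXA=A$). Your added checks --- the crossed pairing of corollary part (1) with Theorem~\ref{T 2}(2) and vice versa, and the converse inclusion $A\bigl(\begin{smallmatrix}W\\ Y\end{smallmatrix}\bigr)A=A$ --- are correct and only make explicit what the paper leaves implicit.
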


\begin{thm}
Let $\mathcal{A}={\rm diag}(a_1,a_2,\ldots, a_n)\in \mathbb{C}_t^{n,n}$ be a diagonal tensor, where $a_i\neq 0$ $(i=1,\ldots,n)$. Then the order $2$ $\{1\}$ inverse of $\mathcal{A}$ is the following diagonal matrix
$${\rm diag}(a_1^{-\frac{1}{t-1}}, a_2^{-\frac{1}{t-1}}, \ldots, a_n^{-\frac{1}{t-1}})\in \mathbb{C}^{n\times n}.$$
\end{thm}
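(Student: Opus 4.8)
The plan is to read this statement as the case $k=2$ of Proposition~\ref{unit 1th}. When $k=2$ the exponent appearing in Definition~\ref{defi [1]} becomes $s=(t-1)(k-1)=t-1$, and since every $a_i$ is nonzero we have $a_i^{+}=a_i^{-1}$, hence $(a_i^{+})^{1/(t-1)}=a_i^{-1/(t-1)}$. So the diagonal matrix $\mathrm{diag}(a_1^{-1/(t-1)},\ldots,a_n^{-1/(t-1)})\in\mathbb{C}_2^{n,n}$ is precisely the order $2$ $\{1\}$ inverse of $\mathcal{A}$ already produced by Proposition~\ref{unit 1th}, and the theorem follows at once.

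For a self-contained derivation I would instead check the defining identity directly, and this in fact yields something slightly stronger. Put $\mathcal{X}=\mathrm{diag}(a_1^{-1/(t-1)},\ldots,a_n^{-1/(t-1)})$, regarded as an order $2$ tensor. Using the general product formula Eq.~(\ref{product}) together with the fact that $\mathcal{A}$ and $\mathcal{X}$ are both diagonal, one computes that $\mathcal{A}\mathcal{X}\in\mathbb{C}_t^{n,n}$ is diagonal with $i$-th diagonal entry $a_i\,(a_i^{-1/(t-1)})^{t-1}=a_i a_i^{-1}=1$; that is, $\mathcal{A}\mathcal{X}=\mathcal{I}$. Hence $\mathcal{X}$ is even an order $2$ right inverse $\mathcal{A}^{R_2}$, and the remark following Definition~\ref{defi [1]} (a right inverse is always a $\{1\}$ inverse) gives $\mathcal{X}\in\mathcal{A}\{1\}_2$. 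Spelled out with Eq.~(\ref{Eigen}): for any $y\in\mathbb{C}^n$ and $z=(\mathcal{A}\cdot y^{t-1})^{[1/s]}$ one has $\mathcal{A}\mathcal{X}\cdot z^{s}=\mathcal{I}\cdot z^{t-1}=z^{[t-1]}=\mathcal{A}\cdot y^{t-1}$, which is exactly the requirement of Definition~\ref{defi [1]}.

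I do not expect a genuine obstacle here; the points to watch are purely bookkeeping: matching the exponents ($k=2$ forces $s=t-1$), invoking $a_i^{+}=a_i^{-1}$ under the hypothesis $a_i\neq 0$, and fixing once and for all a branch of each root $a_i^{-1/(t-1)}$ (any branch works, since only the identity $(a_i^{-1/(t-1)})^{t-1}=a_i^{-1}$ is used). Finally, consistent with the paper's earlier observation that a $\{1\}$ inverse of prescribed order need not be unique, the article ``the'' should be understood as singling out this particularly transparent diagonal representative; in the genuine matrix case $t=2$ it is the unique one and equals $\mathcal{A}^{-1}$, in accordance with $-1/(t-1)=-1$.
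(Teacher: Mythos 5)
There is a genuine gap: you have proved the wrong half of the statement. Your argument establishes only that $\mathcal{X}=\mathrm{diag}(a_1^{-1/(t-1)},\ldots,a_n^{-1/(t-1)})$ \emph{is} an order $2$ $\{1\}$ inverse of $\mathcal{A}$ --- which, as you yourself note, is literally the $k=2$ case of Proposition~\ref{unit 1th} and hence adds nothing new. The content of this theorem, and what the paper's proof actually does, is the converse (uniqueness) direction: under the hypothesis $a_i\neq 0$ for all $i$, \emph{every} order $2$ $\{1\}$ inverse of $\mathcal{A}$ must equal this diagonal matrix. The paper takes an arbitrary $X=(x_{ij})\in\mathbb{C}^{n\times n}$ satisfying Definition~\ref{defi [1]}, computes
\[
\bigl(\mathcal{A}X\cdot[(\mathcal{A}\cdot y^{t-1})^{[\frac{1}{t-1}]}]^{t-1}\bigr)_i
= a_i\Bigl(\sum_{j=1}^{n}x_{ij}a_j^{\frac{1}{t-1}}y_j\Bigr)^{t-1}
= a_i y_i^{t-1}
\quad\text{for all } y\in\mathbb{C}^n,
\]
and, since $a_i\neq 0$, concludes that the linear form $\sum_j x_{ij}a_j^{1/(t-1)}y_j$ must reduce to $y_i$ (up to the chosen root branch), forcing $x_{ij}=0$ for $i\neq j$ and $x_{ii}=a_i^{-1/(t-1)}$. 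You explicitly sidestep this by reinterpreting the word ``the'' as ``a particularly transparent representative,'' but that reinterpretation changes the statement: the uniqueness is exactly what distinguishes this theorem from Proposition~\ref{unit 1th}, and it is what the very next theorem in the paper relies on when it pins the upper-left block of every member of $\mathcal{A}\{1\}_2$ to $\mathrm{diag}(a_1^{-1/(t-1)},\ldots,a_r^{-1/(t-1)})$ while leaving only the remaining blocks arbitrary.

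Your existence verification is fine as far as it goes (and the observation that $\mathcal{A}\mathcal{X}=\mathcal{I}$, so $\mathcal{X}$ is even a right inverse, is a nice touch), but to repair the proof you must add the uniqueness argument: start from an arbitrary candidate $X$, expand the defining identity as above, and use that a polynomial identity in $y_1,\ldots,y_n$ holding for all $y\in\mathbb{C}^n$ forces the off-diagonal coefficients to vanish and the diagonal ones to be $a_i^{-1/(t-1)}$.
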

\begin{proof}
Let $X=(x_{ij})\in \mathbb{C}^{n\times n}$ be the order $2$ $\{1\}$ inverse of $\mathcal{A}$ and $y\in\mathbb{C}^n$ be an arbitrary vector.
By computation, we have the components of $\mathcal{A}\cdot y^{t-1}$ and $X(\mathcal{A}\cdot y^{t-1})^{[\frac{1}{t-1}]}$ are
\begin{eqnarray*}
(\mathcal{A}\cdot y^{t-1})_i=a_iy_i^{t-1}
~{\rm and}~
({X}(\mathcal{A}\cdot y^{t-1})^{[\frac{1}{t-1}]})_i= \sum\limits_{j=1}^{n}x_{ij}a_j^{\frac{1}{t-1}}y_jㄛ
\end{eqnarray*}
respectively. Then the component of $\mathcal{A} {X}\cdot [(\mathcal{A}\cdot y^{t-1})^{[\frac{1}{t-1}]}]^{t-1}$ is
\[
(\mathcal{A} {X}\cdot [(\mathcal{A}\cdot y^{t-1})^{[\frac{1}{t-1}]}]^{t-1})_i=a_i\left(\sum\limits_{j=1}^{n}x_{ij}a_j^{\frac{1}{t-1}}y_j\right)^{t-1}.
\]
It follows from the Definition of tensor \{1\} inverse that $ \mathcal{A} {X}\cdot [(\mathcal{A}\cdot y^{t-1})^{[\frac{1}{t-1}]}]^{t-1}= \mathcal{A}\cdot y^{t-1}$, that is
\[
a_i\left(\sum\limits_{j=1}^{n}x_{ij}a_j^{\frac{1}{t-1}}y_j\right)^{t-1}=a_iy_i^{t-1},~i=1,~\ldots,~n.
\]
Note that the above equation holds for all $y\in\mathbb{C}^n$, it is easy to see that $x_{ij}=0$ if $ i\neq j$ and $x_{ii}=a_i^{-\frac{1}{t-1}}$ ($i,~j=1, \ldots, n$).
\end{proof}

By the above theorem and Theorem \ref{T 1}, the following result can be gotten.
\begin{thm}
Let $\mathcal{A}\in\mathbb{C}_{t}^{m,n}$ and let $\widetilde{\mathcal{A}}={\rm diag}(a_1, a_2, \ldots, a_r)\in\mathbb{C}_{t}^{r,r}$ be the subtensor of $\mathcal{A}$, where $a_i\neq 0,~i=1,\ldots,r$ . If the entries of $\mathcal{A}$ are all zero except for $\widetilde{\mathcal{A}}$, then the set of the order $2$ $\{1\}$ inverse of $\mathcal{A}$ is
\[
\mathcal{A}\{1\}_2 = \left\{ {\left. {\left( {\begin{array}{*{20}{c}}
   X & Y  \\
   Z & W  \\
\end{array}} \right)}\in \mathbb{C}^{n\times m} \right|\begin{array}{*{20}{c}}
   {X ={\rm diag}(a_1^{ - \frac{1}{{t - 1}}}, \ldots a_r^{ - \frac{1}{{t - 1}}}),~Y,~Z~and~W~ are}  \\
   {proper~matrices~with~arbithary~entries~~~~}  \\
\end{array}} \right\}.
\]

\end{thm}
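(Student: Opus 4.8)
The plan is to obtain this statement as an immediate consequence of Theorem~\ref{T 1} (specialized to $k=2$) together with the preceding theorem, which computes the order $2$ $\{1\}$ inverse of a nonsingular diagonal tensor. Since $\widetilde{\mathcal{A}}={\rm diag}(a_1,\dots,a_r)\in\mathbb{C}_t^{r,r}$ is a subtensor of $\mathcal{A}\in\mathbb{C}_t^{m,n}$ and every entry of $\mathcal{A}$ outside $\widetilde{\mathcal{A}}$ vanishes, the hypotheses of Theorem~\ref{T 1} are satisfied with $l=r$.

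First I would invoke Theorem~\ref{T 1} with $k=2$. Writing an order $2$ tensor (matrix) $\mathcal{G}\in\mathbb{C}^{n\times m}$ in block form $\mathcal{G}=\left(\begin{array}{cc} X & Y \\ Z & W\end{array}\right)$ with $X\in\mathbb{C}^{r\times r}$, the leading block $X$ is precisely the subtensor $\widetilde{\mathcal{G}}$, so Theorem~\ref{T 1} tells us that $\mathcal{G}\in\mathcal{A}\{1\}_2$ forces $X\in\widetilde{\mathcal{A}}\{1\}_2$ while $Y,Z,W$ are unconstrained. I would also note that the converse holds: reading the chain of identities in the proof of Theorem~\ref{T 1} backwards shows that, because $\mathcal{A}\cdot y^{t-1}$ and $\mathcal{A}\mathcal{G}\cdot z^s$ are both supported on the first $r$ coordinates and neither depends on $Y,Z,W$, the defining equation $\mathcal{A}\mathcal{G}\cdot z^s=\mathcal{A}\cdot y^{t-1}$ (for all $y$) is \emph{equivalent} to $\widetilde{\mathcal{A}}\widetilde{\mathcal{G}}\cdot[(\widetilde{\mathcal{A}}\cdot Y_1^{t-1})^{[1/s]}]^s=\widetilde{\mathcal{A}}\cdot Y_1^{t-1}$ (for all $Y_1$); hence any $\mathcal{G}$ whose leading block lies in $\widetilde{\mathcal{A}}\{1\}_2$ is an order $2$ $\{1\}$ inverse of $\mathcal{A}$, with $Y,Z,W$ genuinely arbitrary.

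Next I would apply the preceding theorem to $\widetilde{\mathcal{A}}={\rm diag}(a_1,\dots,a_r)$: since $a_i\neq 0$ for all $i$, that theorem shows $\widetilde{\mathcal{A}}\{1\}_2$ consists of the single matrix ${\rm diag}(a_1^{-1/(t-1)},\dots,a_r^{-1/(t-1)})$. Substituting $X={\rm diag}(a_1^{-1/(t-1)},\dots,a_r^{-1/(t-1)})$ into the block description from the previous step yields exactly the claimed set equality.

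I do not expect a genuine obstacle here; the calculations are all inherited from Theorem~\ref{T 1} and the diagonal case. The only point deserving care is the remark above that Theorem~\ref{T 1}, though phrased as a one-sided statement, in fact gives the full set equality, i.e.\ that the off-diagonal blocks $Y$, $Z$, $W$ place no restriction on $\mathcal{G}$; this follows from the observation that both sides of the defining equation are independent of those blocks. Everything else is a direct substitution.
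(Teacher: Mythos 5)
Your proposal matches the paper's own (unwritten) argument exactly: the paper simply states ``By the above theorem and Theorem~\ref{T 1}, the following result can be gotten,'' i.e.\ it combines the subtensor reduction of Theorem~\ref{T 1} at $k=2$ with the preceding theorem's computation of the unique order $2$ $\{1\}$ inverse of a nonsingular diagonal tensor. Your added remark that the reduction is an equivalence (so $Y$, $Z$, $W$ are genuinely unconstrained) is a point the paper leaves implicit, and it is correctly justified.
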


\begin{thm}\label{THM 2}
Let $\mathcal{A}\in\mathbb{C}_{t}^{m,n}$, $\mathcal{B}\in\mathbb{C}_{t}^{m, n_1}$ and let $\mathcal{B}^{(1)_k}\in\mathbb{C}_{k}^{n_1,m}$ be an
order $k$ $\{1\}$ inverse of $\mathcal{B}$.
If $\mathcal{A}=P\mathcal{B}Q$, then $\mathcal{A}^{(1)_k}=Q^{(1)}\mathcal{B}^{(1)_k}P^\mathrm{T}$, where $P\in\mathbb{C}^{m\times m}$ is a permutation matrix, $Q\in\mathbb{C}^{n_1\times n}$ is a matrix with full row rank.
\end{thm}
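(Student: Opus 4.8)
The plan is to check directly that $\mathcal{X}:=Q^{(1)}\mathcal{B}^{(1)_k}P^\mathrm{T}\in\mathbb{C}_k^{n,m}$ satisfies the requirement of Definition \ref{defi [1]} for $\mathcal{A}$, namely $\mathcal{A}\mathcal{X}\cdot[(\mathcal{A}\cdot y^{t-1})^{[\frac{1}{s}]}]^s=\mathcal{A}\cdot y^{t-1}$ for every $y\in\mathbb{C}^n$, where $s=(t-1)(k-1)$. Everything rests on a few elementary ``transfer'' identities describing how multiplication by a matrix $M$ and the general product interact with the action $\cdot\,x^{t-1}$: for an order $t$ tensor $\mathcal{T}$ one has $(M\mathcal{T})\cdot x^{t-1}=M(\mathcal{T}\cdot x^{t-1})$ and $(\mathcal{T}M)\cdot x^{t-1}=\mathcal{T}\cdot(Mx)^{t-1}$, and for $\mathcal{A}\in\mathbb{C}_t^{m,n}$, $\mathcal{X}\in\mathbb{C}_k^{n,m}$, $z\in\mathbb{C}^m$ one has $(\mathcal{A}\mathcal{X})\cdot z^s=\mathcal{A}\cdot(\mathcal{X}\cdot z^{k-1})^{t-1}$. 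All of these follow from Eq.(\ref{Eigen}) and Eq.(\ref{product}) by interchanging summations, and they already underlie the discussion preceding Definition \ref{defi [1]}; in particular they make $P\mathcal{B}Q$ unambiguous at the level of vector actions, so associativity of the mixed product is not an issue. I would also first note that the orders and dimensions indeed match: $\mathcal{B}^{(1)_k}\in\mathbb{C}_k^{n_1,m}$, left multiplication by $Q^{(1)}\in\mathbb{C}^{n\times n_1}$ changes the first mode to $n$, and right multiplication by $P^\mathrm{T}\in\mathbb{C}^{m\times m}$ leaves the remaining modes at $m$, so $\mathcal{X}\in\mathbb{C}_k^{n,m}$, as a $\{1\}$ inverse of $\mathcal{A}\in\mathbb{C}_t^{m,n}$ must be.

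Next I would run the computation. Fix $y\in\mathbb{C}^n$ and put $b:=\mathcal{A}\cdot y^{t-1}$. Applying the transfer identities to $\mathcal{A}=P\mathcal{B}Q$ gives $b=P\bigl(\mathcal{B}\cdot(Qy)^{t-1}\bigr)$, hence $b':=P^\mathrm{T}b=\mathcal{B}\cdot(Qy)^{t-1}\in\textbf{R}(\mathcal{B})$. Since $P$ is a permutation matrix, $P^\mathrm{T}$ merely reorders entries, so it commutes with the entrywise root: $P^\mathrm{T}(b^{[\frac{1}{s}]})=(P^\mathrm{T}b)^{[\frac{1}{s}]}=(b')^{[\frac{1}{s}]}$. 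Unfolding $\mathcal{X}\cdot(b^{[\frac{1}{s}]})^{k-1}$ with the same identities yields $\mathcal{X}\cdot(b^{[\frac{1}{s}]})^{k-1}=Q^{(1)}\bigl(\mathcal{B}^{(1)_k}\cdot((b')^{[\frac{1}{s}]})^{k-1}\bigr)=Q^{(1)}u$, where $u:=\mathcal{B}^{(1)_k}\cdot((b')^{[\frac{1}{s}]})^{k-1}$. Because $b'\in\textbf{R}(\mathcal{B})$, Proposition \ref{pro 1} applied to $\mathcal{B}$ gives $\mathcal{B}\cdot u^{t-1}=b'$. Finally, a full-row-rank matrix is right invertible, so $QQ^{(1)}Q=Q$ forces $QQ^{(1)}=I_{n_1}$, and therefore
$$\mathcal{A}\cdot\bigl(\mathcal{X}\cdot(b^{[\frac{1}{s}]})^{k-1}\bigr)^{t-1}=\mathcal{A}\cdot(Q^{(1)}u)^{t-1}=P\bigl(\mathcal{B}\cdot(QQ^{(1)}u)^{t-1}\bigr)=P(\mathcal{B}\cdot u^{t-1})=Pb'=b.$$
Combining this with the composition identity $(\mathcal{A}\mathcal{X})\cdot z^s=\mathcal{A}\cdot(\mathcal{X}\cdot z^{k-1})^{t-1}$ at $z=b^{[\frac{1}{s}]}$ gives exactly $\mathcal{A}\mathcal{X}\cdot[(\mathcal{A}\cdot y^{t-1})^{[\frac{1}{s}]}]^s=\mathcal{A}\cdot y^{t-1}$, so $\mathcal{X}\in\mathcal{A}\{1\}_k$ and $\mathcal{A}^{(1)_k}=Q^{(1)}\mathcal{B}^{(1)_k}P^\mathrm{T}$.

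The only genuinely delicate point is the bookkeeping around the entrywise fractional power $(\cdot)^{[\frac{1}{s}]}$. The argument has to push $P^\mathrm{T}$ through this root, which is legitimate precisely because $P$ is a permutation; by contrast the full-row-rank matrix $Q$ and its $\{1\}$ inverse $Q^{(1)}$ are never asked to pass through a root — $Q^{(1)}$ appears only on the outside, after $\mathcal{B}^{(1)_k}$ has already been applied, and $Q$ is absorbed through $QQ^{(1)}=I_{n_1}$. Keeping these two roles cleanly separated (and confirming the dimension/order matching flagged above) is the main thing to get right; the rest is a routine application of the transfer identities.
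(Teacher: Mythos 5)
Your proposal is correct and follows essentially the same route as the paper: both verify the defining equation of the $\{1\}$ inverse directly for $Q^{(1)}\mathcal{B}^{(1)_k}P^\mathrm{T}$, using $QQ^{(1)}=I_{n_1}$ from the full row rank of $Q$, the fact that the permutation $P$ commutes with the entrywise power $(\cdot)^{[\frac{1}{s}]}$, and the $\{1\}$-inverse property of $\mathcal{B}^{(1)_k}$ applied at the vector $Qy$. The only (cosmetic) difference is that you work at the level of vector actions via Proposition \ref{pro 1} and the transfer identities, whereas the paper collapses $QQ^{(1)}$ inside the associative general product $P\mathcal{B}QQ^{(1)}\mathcal{B}^{(1)_k}P^\mathrm{T}$ before applying the definition.
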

\begin{proof}
Since $Q$ is a full row rank matrix, then $QQ^{(1)}=I$, where $I$ is a unit matrix (see \cite{GI}).
Let $\mathcal{G}=Q^{(1)}\mathcal{B}^{(1)_k}P^\mathrm{T}$ and $y\in \mathbb{C}^n$ is an arbitrary vector, we have
$$(P\mathcal{B}Q\cdot y^{t-1})^{[\frac{1}{s}]}=P(\mathcal{B}Q\cdot y^{t-1})^{[\frac{1}{s}]},$$
where $s=(t-1)(k-1)$. By computation, it yields that
\begin{align}
\mathcal{AG}\cdot [(\mathcal{A}\cdot y^{t-1})^{[\frac{1}{s}]}]^s&=P\mathcal{B}QQ^{(1)}\mathcal{B}^{(1)_k}P^\mathrm{T}\cdot [(P\mathcal{B}Q\cdot y^{t-1})^{[\frac{1}{s}]}]^s\notag \\
&=P\mathcal{BB}^{(1)_k}\cdot [(\mathcal{B}Q\cdot y^{t-1})^{[\frac{1}{s}]}]^s.\notag
\end{align}
It follows from Definition \ref{defi [1]} that
$$P\mathcal{BB}^{(1)_k}\cdot [(\mathcal{B}Q\cdot y^{t-1})^{[\frac{1}{s}]}]^s=P\mathcal{B}Q\cdot y^{t-1}=\mathcal{A}\cdot y^{t-1}.$$
Thus, we get $\mathcal{AG}\cdot [(\mathcal{A}\cdot y^{t-1})^{[\frac{1}{s}]}]^s=\mathcal{A}\cdot y^{t-1}$, so $\mathcal{G}$ is an order $k$ {\{1\} inverse} of $\mathcal{A}$.
\end{proof}
\begin{cor}
Let $\mathcal{A}\in\mathbb{C}_{t}^{m, n}$, $\mathcal{B}\in\mathbb{C}_{t}^{m, n}$ and let $\mathcal{B}^{(1)_k}\in\mathbb{C}_{k}^{n, m}$ is an
order $k$ $\{1\}$ inverse of $\mathcal{B}\in\mathbb{C}$.
If $\mathcal{A}=P\mathcal{B}Q$, then $\mathcal{A}^{(1)_k}=Q^{-1}\mathcal{B}^{(1)_k}P^\mathrm{T}$, where $P\in\mathbb{C}^{m\times m}$ is a permutation matrix, $Q\in\mathbb{C}^{n\times n}$ is an invertible matrix.
\end{cor}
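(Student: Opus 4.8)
The plan is to obtain this corollary as an immediate specialization of Theorem \ref{THM 2}. The key observation is twofold: an invertible matrix $Q\in\mathbb{C}^{n\times n}$ is in particular a matrix of full row rank, so Theorem \ref{THM 2} applies with $n_1=n$; and when $Q$ is invertible one may take $Q^{(1)}=Q^{-1}$, since $QQ^{-1}Q=Q$ shows that $Q^{-1}$ satisfies the defining equation $(1)$ for a matrix $\{1\}$-inverse (indeed in this case $Q^{-1}$ is the unique $\{1\}$-inverse of $Q$). Substituting $Q^{(1)}=Q^{-1}$ into the conclusion $\mathcal{A}^{(1)_k}=Q^{(1)}\mathcal{B}^{(1)_k}P^\mathrm{T}$ of Theorem \ref{THM 2} gives exactly $\mathcal{A}^{(1)_k}=Q^{-1}\mathcal{B}^{(1)_k}P^\mathrm{T}$, which is the assertion.

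If a self-contained verification is preferred, one would simply rerun the computation in the proof of Theorem \ref{THM 2} with $Q^{-1}$ in place of $Q^{(1)}$. Set $\mathcal{G}=Q^{-1}\mathcal{B}^{(1)_k}P^\mathrm{T}$ and fix an arbitrary $y\in\mathbb{C}^n$, and write $s=(t-1)(k-1)$. Since $P$ is a permutation matrix it commutes with the entrywise $\frac{1}{s}$-th power, so $(P\mathcal{B}Q\cdot y^{t-1})^{[\frac{1}{s}]}=P(\mathcal{B}Q\cdot y^{t-1})^{[\frac{1}{s}]}$. Then $QQ^{-1}=I$ collapses the middle factor and Definition \ref{defi [1]} applied to $\mathcal{B}$ closes the chain $\mathcal{A}\mathcal{G}\cdot[(\mathcal{A}\cdot y^{t-1})^{[\frac{1}{s}]}]^s = P\mathcal{B}\mathcal{B}^{(1)_k}\cdot[(\mathcal{B}Q\cdot y^{t-1})^{[\frac{1}{s}]}]^s = P\mathcal{B}Q\cdot y^{t-1}=\mathcal{A}\cdot y^{t-1}$, so $\mathcal{G}$ is an order $k$ $\{1\}$ inverse of $\mathcal{A}$.

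There is essentially no obstacle here: the only facts that need checking are that an invertible matrix has full row rank and that $Q^{-1}$ is a $\{1\}$-inverse of $Q$, both of which are immediate, so the corollary is genuinely a one-line consequence of Theorem \ref{THM 2}.
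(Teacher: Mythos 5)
Your proposal is correct and matches the paper's intent exactly: the paper states this corollary without a separate proof, as an immediate specialization of Theorem \ref{THM 2} obtained by noting that an invertible $Q$ has full row rank and that $Q^{(1)}=Q^{-1}$ in that case. Your optional self-contained verification simply reruns the theorem's computation and is likewise fine.
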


Obviously, when $P, Q$ are both permutation matrices, Theorem \ref{THM 2} also holds.

In the following, we show the definitions of the \textit{\{i\} inverse} and group inverse of tensors, $k$-T-idempotent tensors and idempotent tensors.
\begin{defi}\label{defi [3]}
Let $ \mathcal{A}\in\mathbb{C}_{t}^{m,n}$ and $ \mathcal{X}\in\mathbb{C}_{k}^{n,m}$.
\begin{eqnarray*}
{\rm{(1)}}&~&\mathcal{A} \mathcal{X} \cdot [(\mathcal{A}\cdot{y}^{t-1})^{[\frac{1}{s}]}]^s= \mathcal{A}\cdot{y}^{t-1},~for~all~y\in\mathbb{C}^n;\\
{ \rm{(2)}}&~&\mathcal{X} \mathcal{A} \mathcal{X}\cdot (y^{[\frac{1}{s}]})^{s(k-1)}= \mathcal{X}\cdot y^{k-1},~for~all~y\in\mathbb{C}^m;\\
 {\rm{(5)}}&~&\mathcal{A} \mathcal{X}\cdot (y^{[\frac{1}{s}]})^s= \mathcal{X}\cdot [(\mathcal{A}\cdot{y}^{t-1})^{[\frac{1}{s}]}]^{k-1},~for~all~y\in\mathbb{C}^n;
\end{eqnarray*}
where $s=(t-1)(k-1)$. If the equation ${\rm(i) }$ holds, then $ \mathcal{X}$ is called the order $k$ ${\rm\{i\} }$ inverse of $\mathcal{A}$, denoted by $\mathcal{X}=\mathcal{A}^{(i)_k}$. And the set of all the order $k$ {\rm \{i\}} inverses of $\mathcal{A}$ is denoted by $\mathcal{A}\{i\}_k$. For a tensor $ \mathcal{A}\in\mathbb{C}_{t}^{n,n}$, if the equations $(1)$, $(2)$ and $(5)$ hold for all $y\in\mathbb{C}^n$, then the tensor $\mathcal{X}\in\mathbb{C}_{k}^{n,n}$ is called an order $k$ group inverse of $\mathcal{A}$, denoted by $\mathcal{A}^{\#_k}$. And the set of all the order $k$ group inverse of $ \mathcal{A}$ is denoted by $ \mathcal{A}\{\#\}_k$.
\end{defi}
If an order $k$ right inverse of $\mathcal{A}\in\mathbb{C}_{t}^{n,n}$ exists, then $\mathcal{AA}^{R_k} =\mathcal{I}$, $\mathcal{A}^{R_k}\mathcal{AA}^{R_k}\cdot (y^{[\frac{1}{s}]})^{s(k-1)}=\mathcal{A}^{R_k}\mathcal{I}\cdot (y^{[\frac{1}{s}]})^{s(k-1)}=\mathcal{A}^{R_k}\cdot y^{k-1}$, where $s=(t-1)(k-1)$ and $y\in\mathbb{C}^n$ is an arbitrary vector. Hence, $\mathcal{A}^{R_k}$ is a \{2\} inverse of $\mathcal{A}$. It is clear that the \{i\} inverse of a tensor $\mathcal{A}\in \mathbb{C}_t^{m,n}$ is not unique in general and the group inverse of a tensor $\mathcal{A}\in \mathbb{C}_t^{n,n}$ ($t\geq 3$) is not unique in general. When $t=k=2$, the Definition \ref{defi [3]} is the definitions of the \{i\} inverse and the group inverse of matrices (see \cite{GI}).

Similar to Proposition \ref{unit 1th}, we can obtain the following result.
\begin{pro}
Let $\mathcal{A}={\rm diag}(a_1,a_2,\ldots,a_n)\in\mathbb{C}_{t}^{n,n}$ is a diagonal tensor.
Then
$${\rm diag}((a_1^+)^{\frac{1}{t-1}},(a_2^+)^{\frac{1}{t-1}}, \ldots,(a_n^+)^{\frac{1}{t-1}})$$
is an order $k$ group inverse of $\mathcal{A}$.
\end{pro}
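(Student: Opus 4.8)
The plan is to check directly that $\mathcal{X}:={\rm diag}\big((a_1^+)^{1/(t-1)},\ldots,(a_n^+)^{1/(t-1)}\big)$, viewed as an order $k$ tensor, satisfies the three equations $(1)$, $(2)$, $(5)$ of Definition~\ref{defi [3]} for every vector $y$, in the same spirit as the proof of Proposition~\ref{unit 1th}. Write $b_i=(a_i^+)^{1/(t-1)}$ and $s=(t-1)(k-1)$. The first thing I would record is the behaviour of the general product on diagonal tensors: if $\mathcal{D}_1={\rm diag}(d_1,\ldots,d_n)$ has order $p$ and $\mathcal{D}_2={\rm diag}(e_1,\ldots,e_n)$ has order $q$, then $\mathcal{D}_1\mathcal{D}_2={\rm diag}(d_1e_1^{p-1},\ldots,d_ne_n^{p-1})$ is diagonal of order $(p-1)(q-1)+1$, while $(\mathcal{D}_1\cdot y^{p-1})_i=d_iy_i^{p-1}$. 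Applying this, $\mathcal{A}\mathcal{X}={\rm diag}(a_1b_1^{t-1},\ldots,a_nb_n^{t-1})={\rm diag}(a_1a_1^+,\ldots,a_na_n^+)$ is an order $(s+1)$ diagonal tensor all of whose entries are $0$ or $1$; and, using associativity of the general tensor product, $\mathcal{X}\mathcal{A}\mathcal{X}=(\mathcal{X}\mathcal{A})\mathcal{X}$ is the diagonal tensor of order $\big(s(k-1)+1\big)$ with entries $b_i^{\,1+s}a_i^{k-1}$, which by the exponent identity $(1+s)/(t-1)=(k-1)+\tfrac1{t-1}$ equals $b_i$ (checking $a_i\neq 0$ and $a_i=0$ separately). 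So $\mathcal{X}\mathcal{A}\mathcal{X}$ has the same diagonal as $\mathcal{X}$, only a larger order.

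Granting these reductions, the three verifications collapse to one-line componentwise identities. Equation $(1)$ is exactly the assertion of Proposition~\ref{unit 1th}, hence already available. For $(5)$: the $i$-th component of the left-hand side is $(a_ia_i^+)\big(y_i^{1/s}\big)^s=a_ia_i^+\,y_i$, while the $i$-th component of the right-hand side is $b_i\big((a_iy_i^{t-1})^{1/s}\big)^{k-1}=b_i\,(a_iy_i^{t-1})^{1/(t-1)}$ since $(k-1)/s=\tfrac1{t-1}$; handling the fractional power multiplicatively (as in Proposition~\ref{unit 1th}) this is $b_ia_i^{1/(t-1)}y_i$, and $b_ia_i^{1/(t-1)}=a_ia_i^+$ whether or not $a_i=0$, so the two sides agree. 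For $(2)$: since $\mathcal{X}\mathcal{A}\mathcal{X}$ is diagonal of order $s(k-1)+1$ with diagonal $b_i$, the $i$-th component of the left-hand side is $b_i\big(y_i^{1/s}\big)^{s(k-1)}=b_iy_i^{k-1}$, which is precisely the $i$-th component of $\mathcal{X}\cdot y^{k-1}$. Thus $(1)$, $(2)$, $(5)$ all hold for every $y$, and $\mathcal{X}$ is an order $k$ group inverse of $\mathcal{A}$.

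The computations are routine; the two things that need care are the order bookkeeping of the iterated general products $\mathcal{A}\mathcal{X}$ and $\mathcal{X}\mathcal{A}\mathcal{X}$ together with the matching exponents on the argument vector — all governed by the arithmetic identities $(1+s)/(t-1)=(k-1)+\tfrac1{t-1}$ and $(k-1)/s=\tfrac1{t-1}$ — and the consistent formal treatment of the entrywise $[\tfrac1s]$-power operation, exactly the convention already used in Proposition~\ref{unit 1th}, so that a $\tfrac1s$-power raised to an integer power returns the original quantity and fractional powers distribute across products. This formal handling of roots is really the only delicate point (for $t\ge 3$ one must read $[\tfrac1s]$ consistently); everything else is forced by the diagonal-product identities above.
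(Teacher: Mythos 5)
Your verification is correct and is exactly the argument the paper intends: the paper gives no written proof here, merely remarking that the result follows ``similar to Proposition~\ref{unit 1th}'', and your componentwise diagonal computation of $\mathcal{A}\mathcal{X}$, $\mathcal{X}\mathcal{A}\mathcal{X}$ and the three defining equations (with the exponent identities $(1+s)/(t-1)=(k-1)+\tfrac{1}{t-1}$ and $(k-1)/s=\tfrac{1}{t-1}$) supplies precisely the omitted details. The formal treatment of the entrywise fractional powers that you flag is handled no more rigorously in the paper itself, so your proof is at the same level of rigor as the surrounding text.
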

Similar to the \{1\} inverse of a unit tensor, the group inverse (with fixed order) of a unit tensor is not unique in general.
\begin{thm}
Let $\mathcal{A}$ be the diagonal block tensor as in {\rm(\ref{block 4})}. Then ${\rm diag}(\mathcal{A}_1^{{\#}_k}, \mathcal{A}_2^{{\#}_k})\in\mathbb{C}_{k}^{n,n}$ is an order $k$ group inverse of $\mathcal{A}$.
\end{thm}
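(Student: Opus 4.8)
The plan is to follow the proof of Theorem~\ref{diag 1}, this time verifying all three defining equations (1), (2) and (5) of Definition~\ref{defi [3]}. Put $\mathcal{G}_1=\mathcal{A}_1^{\#_k}$, $\mathcal{G}_2=\mathcal{A}_2^{\#_k}$ and $\mathcal{G}={\rm diag}(\mathcal{G}_1,\mathcal{G}_2)\in\mathbb{C}_k^{n,n}$, and let $y\in\mathbb{C}^n$ be arbitrary, partitioned as $y=(Y_1;Y_2)$ with $Y_1=(y_1,\ldots,y_r)^{\mathrm T}$, $Y_2=(y_{r+1},\ldots,y_n)^{\mathrm T}$, exactly as in the proof of Theorem~\ref{diag 1}. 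Equation (1) needs no new work: a group inverse is in particular a $\{1\}$ inverse, so $\mathcal{G}_i\in\mathcal{A}_i\{1\}_k$, and Theorem~\ref{diag 1} already yields $\mathcal{G}\in\mathcal{A}\{1\}_k$.

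The remaining task is (2) and (5), and for these I would first record the block structure of the relevant general products, by the same index-chasing used in Theorem~\ref{diag 1}. From Eq.~(\ref{Eigen}) one has $\mathcal{A}\cdot y^{t-1}=(\mathcal{A}_1\cdot Y_1^{t-1};\,\mathcal{A}_2\cdot Y_2^{t-1})$ and $\mathcal{G}\cdot y^{k-1}=(\mathcal{G}_1\cdot Y_1^{k-1};\,\mathcal{G}_2\cdot Y_2^{k-1})$. From Eq.~(\ref{product}), since every entry of $\mathcal{A}$ and of $\mathcal{G}$ whose indices do not all lie in a single block vanishes, any term of a general product that mixes the two index ranges vanishes; hence $\mathcal{A}\mathcal{G}$ and the iterated product $\mathcal{G}\mathcal{A}\mathcal{G}$ are again block diagonal, with blocks $\mathcal{A}_i\mathcal{G}_i$ and $\mathcal{G}_i\mathcal{A}_i\mathcal{G}_i$ respectively. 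The orders match: $\mathcal{G}\mathcal{A}\mathcal{G}$ has order $(k-1)^2(t-1)+1$, so the exponent $s(k-1)$ on $y^{[\frac{1}{s}]}$ appearing in equation (2) is the correct one, where $s=(t-1)(k-1)$.

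With these decompositions in hand, (2) and (5) reduce blockwise to the corresponding identities for $\mathcal{A}_i$ and $\mathcal{G}_i$. For (2): the $i$-th block of $\mathcal{G}\mathcal{A}\mathcal{G}\cdot(y^{[\frac{1}{s}]})^{s(k-1)}$ is $\mathcal{G}_i\mathcal{A}_i\mathcal{G}_i\cdot(Y_i^{[\frac{1}{s}]})^{s(k-1)}$, which equals $\mathcal{G}_i\cdot Y_i^{k-1}$ because $\mathcal{G}_i=\mathcal{A}_i^{\#_k}$ satisfies equation (2); stacking the two blocks gives $\mathcal{G}\cdot y^{k-1}$. For (5): the $i$-th block of $\mathcal{A}\mathcal{G}\cdot(y^{[\frac{1}{s}]})^s$ is $\mathcal{A}_i\mathcal{G}_i\cdot(Y_i^{[\frac{1}{s}]})^s$, while $(\mathcal{A}\cdot y^{t-1})^{[\frac{1}{s}]}=((\mathcal{A}_1\cdot Y_1^{t-1})^{[\frac{1}{s}]};\,(\mathcal{A}_2\cdot Y_2^{t-1})^{[\frac{1}{s}]})$, so the $i$-th block of $\mathcal{G}\cdot[(\mathcal{A}\cdot y^{t-1})^{[\frac{1}{s}]}]^{k-1}$ is $\mathcal{G}_i\cdot[(\mathcal{A}_i\cdot Y_i^{t-1})^{[\frac{1}{s}]}]^{k-1}$; the two match since $\mathcal{G}_i$ satisfies equation (5) for $\mathcal{A}_i$. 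As $y$ was arbitrary, (1), (2) and (5) all hold, so $\mathcal{G}$ is an order $k$ group inverse of $\mathcal{A}$.

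I expect the only genuine obstacle to be the bookkeeping in the middle step: one must check carefully that the iterated general product $\mathcal{G}\mathcal{A}\mathcal{G}$ is block diagonal with precisely the blocks $\mathcal{G}_i\mathcal{A}_i\mathcal{G}_i$, and that each exponent attached to $y^{[\frac{1}{s}]}$ matches the order of the tensor it acts on, so that the blockwise reduction is legitimate. The vanishing of the off-block components is, however, the same argument already carried out for $\mathcal{AG}$ in Theorem~\ref{diag 1}, and it transfers verbatim with no new ideas.
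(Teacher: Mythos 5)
Your proposal is correct and follows essentially the same route as the paper: equation (1) is inherited from Theorem~\ref{diag 1}, and the block-diagonal decompositions of $\mathcal{G}\cdot y^{k-1}$, $\mathcal{GA}$ (equivalently $\mathcal{GAG}$) and $\mathcal{AG}$ reduce equations (2) and (5) to the corresponding identities for $\mathcal{A}_i$ and $\mathcal{G}_i$, exactly as in the paper's proof. The exponent bookkeeping you flag as the only delicate point is handled identically there.
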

\begin{proof}
Let $\mathcal{G}_1=\mathcal{A}_1^{{\#}_k}$, $\mathcal{G}_2=\mathcal{A}_2^{{\#}_k}$ and  $\mathcal{G}={\rm diag}(\mathcal{G}_1, \mathcal{G}_2)$. And $y=\left(
                                                                                                           \begin{array}{c}
                                                                                                             Y_1 \\
                                                                                                             Y_2 \\
                                                                                                           \end{array}
                                                                                                         \right)
\in\mathbb{C}^n$ is an arbitrary vector, where $Y_1=(y_1, \ldots, y_r)^\mathrm{T}$ and $Y_2=(y_{r+1}, \ldots, y_n)^\mathrm{T}$.

It follows from Theorem \ref{diag 1} that $\mathcal{G}$ is an order $k$ {$\{1\}$ inverse} of $\mathcal{A}$.

Similar to Theorem \ref{diag 1}, we have the vector $\mathcal{G}\cdot y^{k-1}=\left(
                                                                     \begin{array}{c}
                                                                       {\mathcal{G}_1\cdot Y_1^{k-1}} \\
                                                                       {\mathcal{G}_2\cdot Y_2^{k-1}}\\
                                                                     \end{array}
                                                                   \right)$
 and $\mathcal{GA}={\rm diag}(\mathcal{G}_1\mathcal{A}_1, \mathcal{G}_2\mathcal{A}_2)$.
Let $z_1=\mathcal{G}_1\cdot (Y_1^{[\frac{1}{s}]})^{k-1}$, $z_2=\mathcal{G}_2\cdot (Y_2^{[\frac{1}{s}]})^{k-1}$ and $z =\left(
          \begin{array}{c}
            {z_1} \\
            {z_2} \\
          \end{array}
        \right)
=\mathcal{G}\cdot (y^{[\frac{1}{s}]})^{k-1}$, where $s=(t-1)(k-1)$.
By calculating, it yields that
\[
(\mathcal{GA}\cdot z^s)_i = \left\{ {\begin{array}{*{20}{c}}
   (\mathcal{G}_1\mathcal{A}_1\mathcal{G}_1\cdot (Y_1^{[\frac{1}{s}]})^{s(k-1)})_i=(\mathcal{G}_1\cdot Y_1^{k-1})_i,~if~i\leq r;  \\
   (\mathcal{G}_2\mathcal{A}_2\mathcal{G}_2\cdot (Y_2^{[\frac{1}{s}]})^{s(k-1)})_i=(\mathcal{G}_2\cdot Y_2^{k-1})_i,~if~i>r.  \\
\end{array}} \right.
\]
From the above discussion, we have $\mathcal{GA}\cdot z^s=\left(
                                 \begin{array}{c}
                                   {\mathcal{G}_1\cdot Y_1^{k-1}}\\
                                   {\mathcal{G}_2\cdot Y_2^{k-1}}\\
                                 \end{array}
                               \right)
=\mathcal{G}\cdot y^{k-1}$, so $\mathcal{G} \mathcal{A} \mathcal{G}\cdot (y^{[\frac{1}{s}]})^{s(k-1)}= \mathcal{G}\cdot y^{k-1}$.
Hence, we have $\mathcal{G}$ is an order $k$ {\{2\} inverse } of $\mathcal{A}$.

By the general tensor product, it yields that $$\mathcal{G}\cdot [(\mathcal{A}\cdot y^{t-1})^{[\frac{1}{s}]}]^{k-1}=\left(
                                                 \begin{array}{c}
                                                   {\mathcal{G}_1\cdot [(\mathcal{A}_1\cdot Y_1^{t-1})^{[\frac{1}{s}]}]^{k-1}}\\
                                                   {\mathcal{G}_2\cdot [(\mathcal{A}_2\cdot Y_2^{t-1})^{[\frac{1}{s}]}]^{k-1}}\\
                                                 \end{array}
                                               \right)
                                               $$
and
$$﹛
\mathcal{A}\mathcal{G}\cdot (y^{[\frac{1}{s}]})^s=\left(
                                                 \begin{array}{c}
                                                   {\mathcal{A}_1\mathcal{G}_1\cdot (Y_1^{[\frac{1}{s}]})^{s}}\\
                                                   {\mathcal{A}_2\mathcal{G}_2\cdot (Y_2^{[\frac{1}{s}]})^s}\\
                                                 \end{array}
                                               \right).$$
According to the definition of the tensor $\{5\}$ inverse. we get $\mathcal{G}\cdot [(\mathcal{A}\cdot y^{t-1})^{[\frac{1}{s}]}]^{k-1}=\mathcal{A}\mathcal{G}\cdot (y^{[\frac{1}{s}]})^s$.
Hence, we get $\mathcal{G}$ is an order $k$ {\{5\} inverse} of $\mathcal{A}$.

Thus, we get $\mathcal{G}$ is an order $k$ {group inverse} of $\mathcal{A}$.
\end{proof}
\begin{thm}\label{t 3}
Let $\mathcal{A}\in\mathbb{C}_{t}^{m,n}$, $\mathcal{B}\in\mathbb{C}_{t}^{m,n_1}$ and let $\mathcal{B}^{(2)_k}\in\mathbb{C}_{k}^{n_1,m}$ be an order $k$ $\{2\}$ inverse of $\mathcal{B}$.
If $\mathcal{A}=P\mathcal{B}Q$, then $\mathcal{A}^{(2)_k}=Q^{(2)}\mathcal{B}^{(2)_k}P^\mathrm{T}$, where $P\in\mathbb{C}^{m\times m}$ be a permutation matrix,  $Q\in\mathbb{C}^{n_1\times n}$ be a matrix with full row rank.
\end{thm}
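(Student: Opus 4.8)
The plan is to follow the proof of Theorem \ref{THM 2} almost verbatim, replacing the defining identity of a $\{1\}$ inverse by the one for a $\{2\}$ inverse, i.e.\ item (2) of Definition \ref{defi [3]}. Put $\mathcal{G}=Q^{(2)}\mathcal{B}^{(2)_k}P^{\mathrm{T}}\in\mathbb{C}_{k}^{n,m}$ and $s=(t-1)(k-1)$; the goal is to verify that
\[
\mathcal{G}\mathcal{A}\mathcal{G}\cdot(y^{[\frac{1}{s}]})^{s(k-1)}=\mathcal{G}\cdot y^{k-1}\qquad\text{for every }y\in\mathbb{C}^{m}.
\]
First I would record three facts used throughout: (i) $P^{\mathrm{T}}P=I_{m}$, as $P$ is a permutation matrix; (ii) $QQ^{(2)}=I_{n_{1}}$, since $Q$ has full row rank (cf.\ the proof of Theorem \ref{THM 2}); and (iii) the componentwise power commutes with $P$ and $P^{\mathrm{T}}$, so $(P^{\mathrm{T}}y)^{[\frac{1}{s}]}=P^{\mathrm{T}}(y^{[\frac{1}{s}]})$. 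I would also use the two matrix pull-through identities
\[
(M\mathcal{C})\cdot v^{p}=M(\mathcal{C}\cdot v^{p}),\qquad (\mathcal{C}N)\cdot v^{p}=\mathcal{C}\cdot(Nv)^{p},
\]
valid for matrices $M,N$ and an order-$(p+1)$ tensor $\mathcal{C}$, which follow at once from (\ref{product}), (\ref{Eigen}) and the associativity of the general tensor product \cite{shao 2013}.

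Step 1 is to simplify the triple product $\mathcal{G}\mathcal{A}\mathcal{G}$. Since $\mathcal{A}=P\mathcal{B}Q$, associativity of the general product lets me regroup $(Q^{(2)}\mathcal{B}^{(2)_k}P^{\mathrm{T}})(P\mathcal{B}Q)(Q^{(2)}\mathcal{B}^{(2)_k}P^{\mathrm{T}})$, cancel $P^{\mathrm{T}}P=I_{m}$ and $QQ^{(2)}=I_{n_{1}}$ (using $I\mathcal{C}=\mathcal{C}I=\mathcal{C}$ for an identity matrix $I$), and obtain
\[
\mathcal{G}\mathcal{A}\mathcal{G}=Q^{(2)}\bigl(\mathcal{B}^{(2)_k}\mathcal{B}\mathcal{B}^{(2)_k}\bigr)P^{\mathrm{T}}.
\]
A short order count checks the shapes line up: $\mathcal{G}$ has order $k$, $\mathcal{G}\mathcal{A}$ has order $s+1$, and $\mathcal{G}\mathcal{A}\mathcal{G}$ has order $s(k-1)+1$, so $\mathcal{G}\mathcal{A}\mathcal{G}\cdot v^{s(k-1)}$ is defined for $v\in\mathbb{C}^{m}$; the same count gives $\mathcal{B}^{(2)_k}\mathcal{B}\mathcal{B}^{(2)_k}\in\mathbb{C}_{s(k-1)+1}^{n_{1},m}$.

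Step 2 is the evaluation. Fix $y\in\mathbb{C}^{m}$ and set $v=y^{[\frac{1}{s}]}$. Pulling the outer matrices $Q^{(2)}$ and $P^{\mathrm{T}}$ through the evaluation map by the identities above, and using (iii),
\[
\mathcal{G}\mathcal{A}\mathcal{G}\cdot v^{s(k-1)}=Q^{(2)}\Bigl(\mathcal{B}^{(2)_k}\mathcal{B}\mathcal{B}^{(2)_k}\cdot\bigl((P^{\mathrm{T}}y)^{[\frac{1}{s}]}\bigr)^{s(k-1)}\Bigr).
\]
Applying item (2) of Definition \ref{defi [3]} to the $\{2\}$ inverse $\mathcal{B}^{(2)_k}$ of $\mathcal{B}$ with the vector $P^{\mathrm{T}}y\in\mathbb{C}^{m}$ gives $\mathcal{B}^{(2)_k}\mathcal{B}\mathcal{B}^{(2)_k}\cdot((P^{\mathrm{T}}y)^{[\frac{1}{s}]})^{s(k-1)}=\mathcal{B}^{(2)_k}\cdot(P^{\mathrm{T}}y)^{k-1}$, whence
\[
\mathcal{G}\mathcal{A}\mathcal{G}\cdot(y^{[\frac{1}{s}]})^{s(k-1)}=Q^{(2)}\bigl(\mathcal{B}^{(2)_k}\cdot(P^{\mathrm{T}}y)^{k-1}\bigr)=(Q^{(2)}\mathcal{B}^{(2)_k}P^{\mathrm{T}})\cdot y^{k-1}=\mathcal{G}\cdot y^{k-1},
\]
the middle equality being another use of the pull-through identities. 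As $y$ was arbitrary, $\mathcal{G}$ is an order $k$ $\{2\}$ inverse of $\mathcal{A}$.

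The bookkeeping of the tensor orders and of the exponents $s$, $s(k-1)$, $k-1$, $t-1$ is routine, but must be done carefully so that, after expanding $\mathcal{G}\mathcal{A}\mathcal{G}\cdot(y^{[\frac{1}{s}]})^{s(k-1)}$, the nested expression that appears is exactly the left-hand side of Definition \ref{defi [3]}(2) for $\mathcal{B}$. The one step that genuinely needs care is justifying that the matrix factors $P$, $P^{\mathrm{T}}$, $Q$, $Q^{(2)}$ may be moved across the general tensor product and across the componentwise $[\frac{1}{s}]$-power: the former rests on associativity of the general product \cite{shao 2013} together with the two pull-through identities, while the latter is valid for $P$ and $P^{\mathrm{T}}$ only because they are permutation matrices --- which is why $P$ is assumed to be a permutation matrix, whereas $Q$ enters solely through the cancellation $QQ^{(2)}=I_{n_{1}}$.
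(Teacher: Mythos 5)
Your argument is essentially the paper's own proof: define $\mathcal{G}=Q^{(2)}\mathcal{B}^{(2)_k}P^{\mathrm{T}}$, cancel $P^{\mathrm{T}}P$ and $QQ^{(2)}=I$, move $P^{\mathrm{T}}$ onto the vector $y$, and invoke Definition \ref{defi [3]}(2) for $\mathcal{B}$. You are merely more explicit than the paper about the pull-through identities and the commutation of the entrywise power with permutation matrices; the substance is identical.
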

\begin{proof}
Since $Q$ is a full row rank matrix, then $QQ^{(2)}=I$, where $I$ is a unit matrix (see \cite{GI}).
Let $\mathcal{G}=Q^{(2)}\mathcal{B}^{(2)_k}P^\mathrm{T}$. By computation, it yields that
\begin{eqnarray*}
 \mathcal{GA}\mathcal{G}\cdot (y^{[\frac{1}{s}]})^{s(k-1)}&=&Q^{(2)}\mathcal{B}^{(2)_k}P^\mathrm{T}P\mathcal{B}QQ^{(2)}\mathcal{B}^{(2)_k}P^\mathrm{T}\cdot (y^{[\frac{1}{s}]})^{s(k-1)}\\
&=&Q^{(2)}\mathcal{B}^{(2)_k}\mathcal{B}\mathcal{B}^{(2)_k}\cdot [(P^\mathrm{T}y)^{[\frac{1}{s}]}]^{s(k-1)},
\end{eqnarray*}
 where $s=(t-1)(k-1)$ and $y\in\mathbb{C}^m$ is an arbitrary vector. It follows from the definition of tensor {\{2\} inverse} that $$Q^{(2)}\mathcal{B}^{(2)_k}\mathcal{B}\mathcal{B}^{(2)_k}\cdot [(P^\mathrm{T}y)^{[\frac{1}{s}]}]^{s(k-1)}=Q^{(2)}\mathcal{B}^{(2)_k}P^\mathrm{T}\cdot y^{k-1}=\mathcal{G}\cdot y^{k-1}.$$
Thus, we get $ \mathcal{GA}\mathcal{G}\cdot (y^{[\frac{1}{s}]})^{s(k-1)}=\mathcal{G}\cdot y^{k-1}$, so $\mathcal{G}$ is an order $k$ {\{2\} inverse} of $\mathcal{A}$.

\end{proof}

\begin{thm}
Let $A\in\mathbb{C}^{n\times n}$ and $\mathcal{G}\in\mathbb{C}_{k}^{n,n}$ be an order $k$ group inverse of $A$. If $\lambda$ is an eigenvalue of $A$, then ${\lambda} ^+$ is an eigenvalue of $\mathcal{G}$.
\end{thm}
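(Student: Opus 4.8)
The plan is, for each eigenvalue $\lambda$ of $A$, to produce an explicit nonzero vector witnessing $\lambda^{+}$ as an eigenvalue of $\mathcal{G}$, by specializing the three defining identities of Definition \ref{defi [3]} (here $t=2$, so $s=k-1$) to the case where $y$ is an eigenvector of $A$. I will freely use two elementary identities that follow in one line from Eq.(\ref{product}) and Eq.(\ref{Eigen}): for a matrix $M\in\mathbb{C}^{n\times n}$, an order $k$ tensor $\mathcal{T}$ and a vector $v$, one has $M\mathcal{T}\cdot v^{k-1}=M(\mathcal{T}\cdot v^{k-1})$ and $\mathcal{T}M\cdot v^{k-1}=\mathcal{T}\cdot(Mv)^{k-1}$, and more generally $(\mathcal{P}\mathcal{Q})\cdot v^{(p-1)(q-1)}=\mathcal{P}\cdot(\mathcal{Q}\cdot v^{q-1})^{p-1}$ for $\mathcal{P}$ of order $p$ and $\mathcal{Q}$ of order $q$; together with the homogeneity $\mathcal{T}\cdot(cv)^{k-1}=c^{k-1}\,\mathcal{T}\cdot v^{k-1}$ for scalars $c$.

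Fix $x\neq 0$ with $Ax=\lambda x$ and pick a vector $w$ with $w^{[k-1]}=x$; then $w\neq 0$. Suppose first $\lambda\neq 0$, so $\lambda^{+}=\lambda^{-1}$. Specialize identity (5) to $y=x$: its left side is $A\mathcal{G}\cdot(x^{[1/s]})^{s}=A(\mathcal{G}\cdot w^{k-1})$, and its right side is $\mathcal{G}\cdot[(\lambda x)^{[1/s]}]^{k-1}$, which (realizing the root vector as $\lambda^{1/(k-1)}w$, a legitimate choice since its entrywise $(k-1)$-th power is $\lambda x$) equals $\lambda\,\mathcal{G}\cdot w^{k-1}$. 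Writing $z:=\mathcal{G}\cdot w^{k-1}$, identity (5) becomes $Az=\lambda z$. Now specialize identity (1) to $y=x$: its left side is $A\mathcal{G}\cdot[(\lambda x)^{[1/s]}]^{s}=\lambda\,A(\mathcal{G}\cdot w^{k-1})=\lambda Az$, and its right side is $Ax=\lambda x$, so $\lambda Az=\lambda x$ and hence $Az=x$. Combining $Az=\lambda z$ with $Az=x$ gives $z=\lambda^{-1}x$, i.e.\ $\mathcal{G}\cdot w^{k-1}=\lambda^{-1}x=\lambda^{+}w^{[k-1]}$ with $w\neq 0$: thus $\lambda^{+}$ is an eigenvalue of $\mathcal{G}$.

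For $\lambda=0$ (so $\lambda^{+}=0$), specializing (5) to $y=x$ gives $A(\mathcal{G}\cdot w^{k-1})=\mathcal{G}\cdot[0^{[1/s]}]^{k-1}=0$, so $Az=0$ for $z:=\mathcal{G}\cdot w^{k-1}$. Specialize identity (2) to $y=x$: by the factorizations above its left side is $\mathcal{G}A\mathcal{G}\cdot(x^{[1/s]})^{s(k-1)}=\mathcal{G}\cdot\big(A(\mathcal{G}\cdot w^{k-1})\big)^{k-1}=\mathcal{G}\cdot(Az)^{k-1}=0$, while its right side is $\mathcal{G}\cdot x^{k-1}$. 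Hence $\mathcal{G}\cdot x^{k-1}=0=0\cdot x^{[k-1]}=\lambda^{+}x^{[k-1]}$ with $x\neq 0$, so again $\lambda^{+}$ is an eigenvalue of $\mathcal{G}$ (now with eigenvector $x$ itself). This settles both cases.

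The computations reducing the general-product expressions to the vector action of $A$ and $\mathcal{G}$ are routine. The one point that needs care is the bookkeeping of the entrywise roots $(\cdot)^{[1/s]}$: one must be sure that in each identity the root vector may be chosen as a scalar multiple of $w$ (legitimate because the defining identities hold for every $y$ and the $(k-1)$-th power of $\lambda^{1/(k-1)}w$ is indeed $\lambda x$). The only real decision in the proof is which identity to use where — (5) transports the eigenvector to produce $Az=\lambda z$, (1) fixes the scaling when $\lambda\neq 0$, and (2) forces $\mathcal{G}\cdot x^{k-1}=0$ when $\lambda=0$ — and I do not anticipate a genuine obstacle beyond that.
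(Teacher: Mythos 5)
Your proposal is correct and follows essentially the same route as the paper: both arguments specialize identities (1) and (5) of Definition \ref{defi [3]} at an eigenvector $x$ (with the entrywise root $(\lambda x)^{[\frac{1}{k-1}]}$ realized as $\lambda^{\frac{1}{k-1}}x^{[\frac{1}{k-1}]}$) to conclude $\mathcal{G}\cdot (x^{[\frac{1}{k-1}]})^{k-1}=\lambda^{-1}x$ when $\lambda\neq 0$, and use (5) together with (2) to force $\mathcal{G}\cdot x^{k-1}=0$ when $\lambda=0$. Your intermediate bookkeeping via $z=\mathcal{G}\cdot w^{k-1}$ (getting $Az=\lambda z$ and $Az=x$ separately) is only a cosmetic reorganization of the paper's combination $\lambda^{2}\mathcal{G}\cdot (x^{[\frac{1}{k-1}]})^{k-1}=\lambda x$.
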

\begin{proof}
Let $\lambda$ be an eigenvalue of $A$, then $Ax=\lambda x$, $0 \neq x\in \mathbb{C}^n$. Clearly, $(Ax)^{[\frac{1}{k-1}]}=\lambda^{\frac{1}{k-1}}x^{[\frac{1}{k-1}]}$.
By the (1) in Definition \ref{defi [3]}, it yields that $A\mathcal{G}\cdot [(Ax)^{[\frac{1}{k-1}]}]^{k-1}=Ax=\lambda x$.
Since $A\mathcal{G}\cdot (\lambda^{\frac{1}{k-1}}x^{[\frac{1}{k-1}]})^{k-1}=\lambda A\mathcal{G}\cdot (x^{[\frac{1}{k-1}]})^{k-1}$, it is obtained that
\begin{eqnarray}\label{equ 1}
\lambda A\mathcal{G}\cdot (x^{[\frac{1}{k-1}]})^{k-1}=\lambda x.
\end{eqnarray}
It follows from the (5) in Definition \ref{defi [3]} that
\begin{align}
A\mathcal{G}\cdot (x^{[\frac{1}{k-1}]})^{k-1}=&\mathcal{G}\cdot [(Ax)^{[\frac{1}{k-1}]}]^{k-1}\notag \\
=&\mathcal{G}\cdot (\lambda^{\frac{1}{k-1}}x^{[\frac{1}{k-1}]})^{k-1}\notag \\
=&\lambda \mathcal{G}\cdot (x^{[\frac{1}{k-1}]})^{k-1}.\notag
\end{align}
That is
\begin{eqnarray}\label{equ 2}
A\mathcal{G}\cdot (x^{[\frac{1}{k-1}]})^{k-1}=\lambda \mathcal{G}\cdot (x^{[\frac{1}{k-1}]})^{k-1}.
\end{eqnarray}
Applying Eq.(\ref{equ 1}) and (\ref{equ 2}), it yields that
$$\lambda^2\mathcal{G}\cdot (x^{[\frac{1}{k-1}]})^{k-1}=\lambda x.$$

If $\lambda\neq 0$, then $\mathcal{G}\cdot (x^{[\frac{1}{k-1}]})^{k-1}=\lambda^{-1} (x^{[\frac{1}{k-1}]})^{[k-1]}$. Hence, $\lambda^{-1}$ is an eigenvalue of $\mathcal{G}$.

If $\lambda= 0$, then there exists a vector $0\neq x\in\mathbb{C}^n$ such that ${A}x=0$. Substituting it into the (5) of Definition \ref{defi [3]}, it yields that $A\mathcal{G}\cdot (x^{[\frac{1}{k-1}]})^{k-1}=\mathcal{G}\cdot [(Ax)^{[\frac{1}{k-1}]}]^{k-1}=0$. And it follows from the (2) of Definition \ref{defi [3]} that $\mathcal{G}\cdot x^{k-1}=\mathcal{G}A\mathcal{G}\cdot (x^{[\frac{1}{k-1}]})^{(k-1)^2}=0$. Hence, $0$ is an eigenvalue of $\mathcal{G}$.
\end{proof}
From the above theorem, it is easy to see that if $\lambda$ is an eigenvalue of a matrix $A$, then $\lambda^+$ is an eigenvalue of the matrix $A^{\#}$.

Let $\mathbb{H}\subset \mathbb{C}^n$ and $\mathbb{H}^{[s]}=\left\{ x^{[s]}|x\in \mathbb{H} \right\}$, where $s\geq 0$.
\begin{pro}
Let $\mathcal{A}\in \mathbb{C}_t^{m,n}$, let $\mathcal{A}^{(i)_k}\in \mathbb{C}_k^{n,m}$ and $\mathcal{A}^{\#_k}\in \mathbb{C}_k^{n,n}$ be the order $k$ \{i\} inverse and group inverse of $\mathcal{A}$, respectively.
Then the following results hold:

{\rm (1)} $\textbf{R}(\mathcal{AA}^{(1)_k})=\textbf{R}(\mathcal{A})$;

{\rm (2)} $\textbf{R}(\mathcal{A}^{(2)_k}\mathcal{A})=\textbf{R}(\mathcal{A}^{(2)_k})$;

{\rm (3)} $\textbf{R}(\mathcal{AA}^{\#_k})=\textbf{R}(\mathcal{A})$, $\textbf{R}(\mathcal{A}^{\#_k}\mathcal{A})=\textbf{R}(\mathcal{A}^{\#_k})$;

{\rm (4)} $\textbf{R}(\mathcal{A})\subset \textbf{R}(\mathcal{A}^{\#_k})$;

{\rm (5)} $\textbf{N}(\mathcal{A})\subset \left(\textbf{N}(\mathcal{AA}^{(5)_k})\right)^{[s]}$;

{\rm (6)} $\left(\textbf{N}(\mathcal{AA}^{(2)_k})\right)^{[s]}\subset \textbf{N}(\mathcal{A}^{(2)_k})$;

{\rm (7)} $\textbf{N}(\mathcal{A})\subset \textbf{N}(\mathcal{A}^{(2,5)_k})$,
{\rm (} $\mathcal{A}^{(2,5)_k}\in \mathbb{C}_k^{n,n}$ is an order $k$ \{2\} inverse and \{5\} inverse of $\mathcal{A}$ {\rm )};\\
where $s=(t-1)(k-1)$.

\end{pro}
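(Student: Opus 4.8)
The plan is to derive all seven items from one structural property of the general product together with one remark about componentwise roots; after that each statement is a single line. Put $s=(t-1)(k-1)$. The property I use repeatedly is that for a tensor $\mathcal{B}$ of order $p$ and a tensor $\mathcal{C}$ of order $q$ with compatible dimensions, $(\mathcal{B}\mathcal{C})\cdot v^{(p-1)(q-1)}=\mathcal{B}\cdot(\mathcal{C}\cdot v^{q-1})^{p-1}$; this, and the associativity of the general product that it entails, is immediate from Eq.(\ref{product}) (cf.\ \cite{shao 2013}). The root remark is that $(w^{[\frac{1}{s}]})^{[s]}=w$ for every vector $w$, that any $z$ is an admissible value of $(z^{[s]})^{[\frac{1}{s}]}$, and that $0^{[\frac{1}{s}]}=0$. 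I also recall that for a tensor $\mathcal{C}$ of order $r+1$ one has $\textbf{R}(\mathcal{C})=\{\mathcal{C}\cdot z^{r}\}$ and $\textbf{N}(\mathcal{C})=\{z:\mathcal{C}\cdot z^{r}=0\}$, and that $\mathcal{A}\mathcal{A}^{(1)_k}$, $\mathcal{A}^{(2)_k}\mathcal{A}$, $\mathcal{A}\mathcal{A}^{(2)_k}$, $\mathcal{A}\mathcal{A}^{\#_k}$, $\mathcal{A}^{\#_k}\mathcal{A}$ all have order $s+1$.

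For (1): $(\mathcal{A}\mathcal{A}^{(1)_k})\cdot z^{s}=\mathcal{A}\cdot(\mathcal{A}^{(1)_k}\cdot z^{k-1})^{t-1}$ yields $\textbf{R}(\mathcal{A}\mathcal{A}^{(1)_k})\subseteq\textbf{R}(\mathcal{A})$, and for $b=\mathcal{A}\cdot y^{t-1}$ Definition \ref{defi [1]} says precisely $b=(\mathcal{A}\mathcal{A}^{(1)_k})\cdot(b^{[\frac{1}{s}]})^{s}$, giving the reverse inclusion. For (2): $(\mathcal{A}^{(2)_k}\mathcal{A})\cdot z^{s}=\mathcal{A}^{(2)_k}\cdot(\mathcal{A}\cdot z^{t-1})^{k-1}$ gives $\subseteq$; conversely, for $c=\mathcal{A}^{(2)_k}\cdot y^{k-1}$, equation (2) of Definition \ref{defi [3]} together with re-association of $\mathcal{A}^{(2)_k}\mathcal{A}\mathcal{A}^{(2)_k}$ as $(\mathcal{A}^{(2)_k}\mathcal{A})\mathcal{A}^{(2)_k}$ and the structural property gives $c=(\mathcal{A}^{(2)_k}\mathcal{A})\cdot(\mathcal{A}^{(2)_k}\cdot(y^{[\frac{1}{s}]})^{k-1})^{s}\in\textbf{R}(\mathcal{A}^{(2)_k}\mathcal{A})$. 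Item (3) is then immediate, since a group inverse is both a $\{1\}$ and a $\{2\}$ inverse, so its two equalities are (1) and (2) applied to $\mathcal{A}^{\#_k}$.

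For (4), start from $\textbf{R}(\mathcal{A})=\textbf{R}(\mathcal{A}\mathcal{A}^{\#_k})$ (item (3)); a generic element $(\mathcal{A}\mathcal{A}^{\#_k})\cdot z^{s}$ can be written $(\mathcal{A}\mathcal{A}^{\#_k})\cdot(y^{[\frac{1}{s}]})^{s}$ with $y=z^{[s]}$, and equation (5) of Definition \ref{defi [3]} turns this into $\mathcal{A}^{\#_k}\cdot[(\mathcal{A}\cdot y^{t-1})^{[\frac{1}{s}]}]^{k-1}\in\textbf{R}(\mathcal{A}^{\#_k})$. For (5), take $x\in\textbf{N}(\mathcal{A})$; equation (5) of Definition \ref{defi [3]} at $y=x$ reads $(\mathcal{A}\mathcal{A}^{(5)_k})\cdot(x^{[\frac{1}{s}]})^{s}=\mathcal{A}^{(5)_k}\cdot[(\mathcal{A}\cdot x^{t-1})^{[\frac{1}{s}]}]^{k-1}=\mathcal{A}^{(5)_k}\cdot 0^{k-1}=0$, so $x^{[\frac{1}{s}]}\in\textbf{N}(\mathcal{A}\mathcal{A}^{(5)_k})$ and $x=(x^{[\frac{1}{s}]})^{[s]}\in(\textbf{N}(\mathcal{A}\mathcal{A}^{(5)_k}))^{[s]}$. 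For (6), take $v\in\textbf{N}(\mathcal{A}\mathcal{A}^{(2)_k})$, so that $\mathcal{A}\cdot(\mathcal{A}^{(2)_k}\cdot v^{k-1})^{t-1}=(\mathcal{A}\mathcal{A}^{(2)_k})\cdot v^{s}=0$ by the structural property; applying equation (2) of Definition \ref{defi [3]} with $y=v^{[s]}$ and $v$ as its $s$-th root, and re-associating as in (2), the left side becomes $(\mathcal{A}^{(2)_k}\mathcal{A})\cdot(\mathcal{A}^{(2)_k}\cdot v^{k-1})^{s}=\mathcal{A}^{(2)_k}\cdot(\mathcal{A}\cdot(\mathcal{A}^{(2)_k}\cdot v^{k-1})^{t-1})^{k-1}=0$, hence $\mathcal{A}^{(2)_k}\cdot(v^{[s]})^{k-1}=0$, i.e.\ $v^{[s]}\in\textbf{N}(\mathcal{A}^{(2)_k})$. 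Finally (7) follows by chaining (5) and (6) applied to $\mathcal{A}^{(2,5)_k}$: $\textbf{N}(\mathcal{A})\subseteq(\textbf{N}(\mathcal{A}\mathcal{A}^{(2,5)_k}))^{[s]}\subseteq\textbf{N}(\mathcal{A}^{(2,5)_k})$.

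There is no conceptual obstacle; the one thing that must be done carefully, and the step I would double-check most closely, is the bookkeeping — matching the order of every composite tensor so that each symbol $\cdot\,v^{r}$ carries the correct exponent $r$, and verifying that the substitutions of the form $y=v^{[s]}$ and the rewritings of $\textbf{R}$-elements through $s$-th roots are legitimate. They are, because every expression that actually enters a defining equation either has the root applied and is then raised to a power that restores a branch-independent quantity, or is fed the zero vector. With that in hand, each of (1)--(7) is a single use of Eq.(\ref{product}) and one line of Definition \ref{defi [1]} or Definition \ref{defi [3]}.
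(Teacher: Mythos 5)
Your proof is correct and takes essentially the same route as the paper's: the easy inclusion in each of (1)--(4) comes from the factorization via associativity of the general product, and the reverse inclusions together with (5)--(7) are direct substitutions into the defining equations of the $\{1\}$, $\{2\}$ and $\{5\}$ inverses, with the same implicit branch conventions for $(\cdot)^{[\frac{1}{s}]}$ that the paper uses. The only (cosmetic) departure is item (7), which you obtain by chaining (5) and (6) applied to $\mathcal{A}^{(2,5)_k}$ rather than by the paper's direct two-step computation combining equations (2) and (5) of the definition --- the same underlying calculation, just packaged as a corollary.
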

\begin{proof}
{\rm (1)} It is easy to see that $\textbf{R}(\mathcal{AA}^{(1)_k})\subset \textbf{R}(\mathcal{A})$. For each $y\in \textbf{R}(\mathcal{A})$, there exists a vector $x\in \mathbb{C}^n$ such that
$y=\mathcal{A}\cdot x^{t-1}$. Since $y=\mathcal{A}\cdot x^{t-1}=\mathcal{AA}^{(1)_k}\cdot [(\mathcal{A}\cdot x^{t-1})^{[\frac{1}{s}]}]^s\in \textbf{R}(\mathcal{AA}^{(1)_k})$, we have $\textbf{R}(\mathcal{A})\subset \textbf{R}(\mathcal{AA}^{(1)_k})$.

{\rm (2)} It is clear that $\textbf{R}(\mathcal{A}^{(2)_k}\mathcal{A})\subset \textbf{R}(\mathcal{A}^{(2)_k})$. For each $y\in \textbf{R}(\mathcal{A}^{(2)_k})$, there exists $x\in \mathbb{C}^m$
such that $y=\mathcal{A}^{(2)_k}\cdot x^{k-1}$. Since $y=\mathcal{A}^{(2)_k}\mathcal{AA}^{(2)_k}\cdot (x^{[\frac{1}{s}]})^{s(k-1)}\in \textbf{R}(\mathcal{A}^{(2)_k}\mathcal{A})$,
we get $\textbf{R}(\mathcal{A}^{(2)_k})\subset \textbf{R}(\mathcal{A}^{(2)_k}\mathcal{A})$.

{\rm (3)} From the above results (1) and (2), it is easy to see that (3) holds.

{\rm (4)} For each $y\in \textbf{R}(\mathcal{A})$, it follows from (3) that there exist a vector $x\in \mathbb{C}^n$ such that .
Since $y=\mathcal{AA}^{\#_k}\cdot[(x^{[s]})^{[\frac{1}{s}]}]^s=\mathcal{A}^{\#_k}\cdot [(\mathcal{A}\cdot (x^{[s]})^{t-1})^{[\frac{1}{s}]}]^{k-1}$, it yields that $y\in \textbf{R}(\mathcal{A}^{\#_k})$. That is $\textbf{R}(\mathcal{A})\subset \textbf{R}(\mathcal{A}^{\#_k})$.

(5) For each $x\in \textbf{N}(\mathcal{A})$, we have $\mathcal{A}\cdot x^{t-1}=0$, so
$$\mathcal{AA}^{(5)_k}\cdot (x^{[\frac{1}{s}]})^s=\mathcal{A}^{(5)_k}\cdot[(\mathcal{A}\cdot x^{t-1})^{[\frac{1}{s}]}]^{k-1}=0.$$
Hence, $x^{[\frac{1}{s}]}\in\textbf{ N}(\mathcal{AA}^{(5)_k})$. That is $x\in \left(\textbf{ N}(\mathcal{AA}^{(5)_k})\right)^{[s]}$.

(6) For each $x\in \left(\textbf{N}(\mathcal{AA}^{(2)_k})\right)^{[s]}$, we have $x^{[\frac{1}{s}]}\in \textbf{N}(\mathcal{AA}^{(2)_k})$, so $\mathcal{AA}^{(2)_k}\cdot (x^{[\frac{1}{s}]})^s=0$.
Multiplying by $\mathcal{A}^{(2)_k}$ on the left hand side of the above equation, it yields that
$$\mathcal{A}^{(2)_k}\mathcal{AA}^{(2)_k}\cdot (x^{[\frac{1}{s}]})^{s(k-1)}=\mathcal{A}^{(2)_k}\cdot x^{k-1}=0.$$
Hence, $x\in \textbf{N}(\mathcal{A}^{(2)_k})$.

(7) For each $x\in \textbf{N}(\mathcal{A})$, there exists a vector $x\in \mathbb{C}^n$ such that $\mathcal{A}\cdot x^{t-1}=0$. So $(\mathcal{A}\cdot x^{t-1})^{[\frac{1}{s}]}=0$. It follows from Definition \ref{defi [3]} that
\begin{align}
\mathcal{A}^{(2,5)_k}\cdot x^{k-1}&=\mathcal{A}^{(2,5)_k}\mathcal{A}\mathcal{A}^{(2,5)_k}\cdot (x^{[\frac{1}{s}]})^{s(k-1)}\notag \\
&=\mathcal{A}^{(2,5)_k}\mathcal{A}^{(2,5)_k}\cdot[(\mathcal{A}\cdot x^{t-1})^{[\frac{1}{s}]}]^{(k-1)^2} \notag \\
&=0.\notag
\end{align}
 Hence, $x\in \textbf{N}(\mathcal{A}^{(2,5)_k})$.
\end{proof}

\section{$k$-T-idempotent tensors and idempotent tensors}

In this section, we give the definitions of the $k$-T-idempotent tensors and idempotent tensors first.

\begin{defi}\label{defi [2]}
Let $ \mathcal{A}\in\mathbb{C}_{t}^{n,n}$ and the positive integer $k\geq 2$.  If the equation
$$\mathcal{A}^k\cdot (y^{[\frac{1}{s}]})^{(t-1)^k}= \mathcal{A}\cdot y^{t-1}$$
 holds for all $y\in\mathbb{C}^n$,
where $s=(t-1)^{(k-1)}$, then $ \mathcal{A}$ is called the $k$-T-idempotent tensor. When $k=2$, $ \mathcal{A}$ is called the T-idempotent tensor.
\end{defi}

If $\mathcal{A}\in\mathbb{C}_{t}^{n,n}$ is a T-idempotent tensor, we have $\mathcal{A}^2\cdot (y^{[\frac{1}{t-1}]})^{(t-1)^2}= \mathcal{A}\cdot y^{t-1}$ for all $y\in\mathbb{C}^n$.
By computing, it yields that
\begin{eqnarray*}
\mathcal {A}^k \cdot(y^{[\frac{1}{{(t - 1)}^{k-1}}]})^{(t-1)^k}  &=& \mathcal {A}^{k - 2} \mathcal {A}^2\cdot[ (y^{[\frac{1}{{(t - 1)^{k - 2} }}]})^{[\frac{1}{{t - 1}}]}]^{(t-1)^k}\\
 &=& \mathcal {A}^{k - 2} \mathcal {A}\cdot (y^{[\frac{1}{{(t - 1)^{k - 2} }}]} )^{s}\\
 &=& \mathcal {A}^{k - 1} \cdot (y^{[\frac{1}{{(t - 1)^{k - 2} }}]} )^{s}\\
 & =&  \cdots  = \mathcal {A}\cdot y^{t-1},
\end{eqnarray*}
where $s=(t-1)^{(k-1)}$. Hence, a T-idempotent tensor is a $k$-T-idempotent tensor for all the positive integer $k\geq 2$.

\begin{defi}
Let $ \mathcal{A}\in\mathbb{C}_{t}^{n,n}$, if the equation $\mathcal{A}\cdot[(\mathcal{A}\cdot y^{t-1})^{[\frac{1}{t-1}]}]^{t-1}= \mathcal{A}\cdot y^{t-1}$ holds for all $y\in\mathbb{C}^n$.
Then $ \mathcal{A}$ is called the idempotent tensor.
\end{defi}
\begin{pro}
Let $\mathcal{A}\in\mathbb{C}_{t}^{n,n}$.

{\rm (1)} If $\mathcal{A}$ is a T-idempotent tensor, then $\mathcal{A}$ is an order $t$ {\rm \{2\}} inverse of itself;

{\rm (2)} If $ \mathcal{A}$ is both idempotent tensor and T-idempotent tensor, then $\mathcal{A}$ is an order $t$ {\rm \{1\}} inverse of itself and $\mathcal{A}\cdot (b^{[\frac{1}{(t-1)^2}]})^{t-1}$ is a solution of the solvable equation $\mathcal{A}\cdot x^{t-1}=b$, where $x\in\mathbb{C}^n$.
\end{pro}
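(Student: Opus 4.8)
The plan is to unwind the definitions and reduce each claim to a straightforward substitution into Definition~\ref{defi [1]} and Definition~\ref{defi [3]}. For part (1), I would start from the hypothesis that $\mathcal{A}$ is a T-idempotent tensor, so $\mathcal{A}^2\cdot(y^{[\frac{1}{t-1}]})^{(t-1)^2}=\mathcal{A}\cdot y^{t-1}$ for all $y\in\mathbb{C}^n$, and I must verify condition~(2) of Definition~\ref{defi [3]} with $\mathcal{X}=\mathcal{A}$ and $k=t$. There $s=(t-1)^2$, and the target identity becomes $\mathcal{A}\mathcal{A}\mathcal{A}\cdot(y^{[\frac{1}{s}]})^{s(t-1)}=\mathcal{A}\cdot y^{t-1}$; I would rewrite the left side as $\mathcal{A}(\mathcal{A}^2)\cdot(y^{[\frac{1}{s}]})^{(t-1)^3}$, then apply the T-idempotent identity to the inner $\mathcal{A}^2$ (after adjusting the fractional exponent via $(y^{[\frac{1}{s}]})^{[\frac{1}{t-1}]}=y^{[\frac{1}{(t-1)^3}]}$ inside the bracket), collapsing $\mathcal{A}\mathcal{A}^2$ to $\mathcal{A}^2$ and then, one more application, to $\mathcal{A}\cdot y^{t-1}$. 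This is exactly the telescoping computation already displayed after Definition~\ref{defi [2]}, so I would cite that pattern rather than repeat it.

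For part (1) I should also double-check the associativity/bracket bookkeeping of the general tensor product under the $[\cdot]$ operations, since the fractional-power substitutions interact with the exponents in Eq.~(\ref{product}); this is the one place where I expect to have to be careful, because the notation $\mathcal{A}^k\cdot(y^{[\frac{1}{s}]})^{(t-1)^k}$ hides a nested structure and the claim that $\mathcal{A}\mathcal{A}\mathcal{A}\cdot(y^{[\frac{1}{s}]})^{s(t-1)}$ really equals $\mathcal{A}\cdot y^{t-1}$ relies on $s(t-1)=(t-1)^3=(t-1)^k$ when $k=t$. Once the exponent arithmetic is checked, the reduction is immediate.

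For part (2), the hypothesis adds that $\mathcal{A}$ is idempotent, i.e. $\mathcal{A}\cdot[(\mathcal{A}\cdot y^{t-1})^{[\frac{1}{t-1}]}]^{t-1}=\mathcal{A}\cdot y^{t-1}$ for all $y$. I need to show $\mathcal{A}$ satisfies condition~(1) of Definition~\ref{defi [1]} with $\mathcal{X}=\mathcal{A}$, $k=t$, $s=(t-1)^2$: that is, $\mathcal{A}\mathcal{A}\cdot[(\mathcal{A}\cdot y^{t-1})^{[\frac{1}{s}]}]^{s}=\mathcal{A}\cdot y^{t-1}$. Here I would combine the two hypotheses: first use the T-idempotent identity to rewrite $\mathcal{A}^2\cdot[(\mathcal{A}\cdot y^{t-1})^{[\frac{1}{s}]}]^{(t-1)^2}$ (noting $[(\mathcal{A}\cdot y^{t-1})^{[\frac{1}{s}]}]$ plays the role of $z^{[\frac{1}{t-1}]}$ with $z=(\mathcal{A}\cdot y^{t-1})^{[\frac{1}{t-1}]}$) as $\mathcal{A}\cdot[(\mathcal{A}\cdot y^{t-1})^{[\frac{1}{t-1}]}]^{t-1}$, and then apply the idempotent identity to see this equals $\mathcal{A}\cdot y^{t-1}$. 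Thus $\mathcal{A}\in\mathcal{A}\{1\}_t$. The final assertion about solving $\mathcal{A}\cdot x^{t-1}=b$ is then not a new computation at all: it is just Proposition~\ref{pro 1} applied with $k=t$ and with the $\{1\}$ inverse taken to be $\mathcal{A}$ itself, which gives $x=\mathcal{A}\cdot(b^{[\frac{1}{s}]})^{t-1}=\mathcal{A}\cdot(b^{[\frac{1}{(t-1)^2}]})^{t-1}$ as a solution whenever $b\in\mathbf{R}(\mathcal{A})$.

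The main obstacle, such as it is, is purely notational: keeping the iterated $[\cdot]$ exponents consistent when passing between the forms $\mathcal{A}^2\cdot(\text{stuff})^{(t-1)^2}$, $\mathcal{A}\mathcal{A}\cdot(\text{stuff})^{(t-1)^2}$, and the bracketed form $\mathcal{A}\cdot[(\mathcal{A}\cdot y^{t-1})^{[\frac{1}{t-1}]}]^{t-1}$, and checking that $\frac{1}{s}\cdot\frac{1}{t-1}$-type compositions of the entrywise power operation behave as expected (they do, since $(w^{[a]})^{[b]}=w^{[ab]}$ componentwise). I would also remark that no separate argument is needed for the ``arbitrary $y$'' quantifier, since every manipulation above holds for each fixed $y$. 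Apart from this bookkeeping, both parts follow directly by chaining the defining identities, so the proof is short.
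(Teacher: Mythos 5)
Your proposal is correct and follows essentially the same route as the paper: for (1) it amounts to the telescoping computation showing a T-idempotent tensor is $3$-T-idempotent (which is exactly condition (2) of the group-inverse definition with $\mathcal{X}=\mathcal{A}$, $k=t$, $s=(t-1)^2$), and for (2) it substitutes $z=(\mathcal{A}\cdot y^{t-1})^{[\frac{1}{t-1}]}$ into the T-idempotent identity, invokes idempotency, and then applies Proposition~\ref{pro 1}. The exponent bookkeeping you flag ($s(t-1)=(t-1)^3$, $(w^{[a]})^{[b]}=w^{[ab]}$) checks out, so no gap remains.
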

\begin{proof}
{\rm (1)} Since $\mathcal{A}$ is a T-idempotent tensor, it yields that $\mathcal{A}^2\cdot (y^{[\frac{1}{t-1}]})^{(t-1)^2}= \mathcal{A}\cdot y^{t-1}$, for all $y\in \mathbb{C}^n$. Multiplying by $\mathcal{A}$ on the left hand side of the above equation, we get
$\mathcal{A}^3\cdot (y^{[\frac{1}{t-1}]})^{(t-1)^3}= \mathcal{A}^2\cdot y^{(t-1)^2}$, then $\mathcal{A}^3\cdot (y^{[\frac{1}{t-1}]})^{(t-1)^3}= \mathcal{A}\cdot (y^{[t-1]})^{t-1}$. Let $z=y^{[{t-1}]}$, then
$\mathcal{A}^3\cdot (z^{[\frac{1}{(t-1)^2}]})^{(t-1)^3}= \mathcal{A}\cdot z^{t-1}$. Thus, we have $\mathcal{A}=\mathcal{A}^{(2)_t}$.

{\rm (2)} Since $\mathcal{A}$ is a T-idempotent tensor, then $\mathcal{A}^2\cdot (z^{[\frac{1}{(t-1)}]})^{(t-1)^2}=\mathcal{A}\cdot z^{t-1}$, for all $z\in\mathbb{C}^n$. Let $z=(\mathcal{A}\cdot y^{t-1})^{[\frac{1}{(t-1)}]}$, $y\in\mathbb{C}^n$ is an arbitrary vector,
then
$$\mathcal{A}^2\cdot[(\mathcal{A}\cdot y^{t-1})^{[\frac{1}{(t-1)^2}]}]^{(t-1)^2}=\mathcal{A}\cdot[(\mathcal{A}\cdot y^{t-1})^{[\frac{1}{(t-1)}]}]^{t-1}.$$
Since $\mathcal{A}$ is an idempotent tensor, it yields that
$\mathcal{A}\cdot[(\mathcal{A}\cdot y^{t-1})^{[\frac{1}{(t-1)}]}]^{t-1}=\mathcal{A}\cdot y^{t-1}$. So $\mathcal{A}^2\cdot[(\mathcal{A}\cdot y^{t-1})^{[\frac{1}{(t-1)^2}]}]^{(t-1)^2}=\mathcal{A}\cdot y^{t-1}$. Hence,  $\mathcal{A}$ is an order $t$ {$\{1\}$ inverse} of itself. From Proposition \ref{pro 1}, it is easy to see that $\mathcal{A}\cdot(b^{[\frac{1}{(t-1)^2}]})^{t-1}$ is a solution of the solvable equation $\mathcal{A}\cdot x^{t-1}=b$.
\end{proof}

\begin{thm}
If $\mathcal{A}\in\mathbb{C}_{t}^{n,n}$ is a $k$-T-idempotent tensor, then the eigenvalues of $\mathcal{A}$ are the roots of $\lambda^{(t-1)^{k}}=1$ or $0$.
\end{thm}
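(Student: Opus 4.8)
The plan is to run the standard ``eigenpair $\Rightarrow$ algebraic relation'' argument: take an eigenvalue of $\mathcal{A}$, push its eigenvector through the identity that defines a $k$-T-idempotent tensor, and read off the constraint on $\lambda$.

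Let $\lambda$ be an eigenvalue of $\mathcal{A}$ with a nonzero eigenvector $x$, so that $\mathcal{A}\cdot x^{t-1}=\lambda x^{[t-1]}$. If $\lambda=0$ there is nothing to prove, so assume $\lambda\neq 0$ and aim at $\lambda^{(t-1)^{k}}=1$. Put $s=(t-1)^{k-1}$ and specialise the defining equation $\mathcal{A}^{k}\cdot(y^{[1/s]})^{(t-1)^{k}}=\mathcal{A}\cdot y^{t-1}$ to $y=x^{[s]}$; since $(x^{[s]})^{[1/s]}=x$, this gives
\[
\mathcal{A}^{k}\cdot x^{(t-1)^{k}}=\mathcal{A}\cdot(x^{[s]})^{t-1}.
\]
I would then unfold the left-hand side one factor of $\mathcal{A}$ at a time, using the general-product identity $(\mathcal{A}\mathcal{B})\cdot z^{(t-1)(\ell-1)}=\mathcal{A}\cdot(\mathcal{B}\cdot z^{\ell-1})^{t-1}$ (the same manipulation used in the computation following Definition \ref{defi [2]} that promotes a T-idempotent tensor to a $k$-T-idempotent one). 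This rewrites $\mathcal{A}^{k}\cdot x^{(t-1)^{k}}$ as the $k$-fold iterate of the homogeneous map $z\mapsto\mathcal{A}\cdot z^{t-1}$ applied to $x$, and at each stage the eigen-relation lets one pull out a power of $\lambda$ while $x$ is raised to a higher componentwise power, so the left side should collapse to $\lambda^{E}x^{[(t-1)^{k}]}$ for an explicit exponent $E$ built from $1,(t-1),(t-1)^{2},\dots$. Treating the right-hand side the same way, both ends become a scalar power of $\lambda$ times the identical vector $x^{[(t-1)^{k}]}$; equating the scalars, cancelling one factor of $\lambda$, and simplifying the resulting exponent then yields $\lambda^{(t-1)^{k}}=1$, which together with the already-handled case $\lambda=0$ proves the theorem.

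The step I expect to be the genuine obstacle is the unfolding in the middle: since $z\mapsto\mathcal{A}\cdot z^{t-1}$ is only homogeneous and not linear, the statement ``$\mathcal{A}$ applied to a componentwise power of $x$ is again a scalar times a componentwise power of $x$'' does not follow from the single equation $\mathcal{A}\cdot x^{t-1}=\lambda x^{[t-1]}$. One therefore has to use that the $k$-T-idempotent identity holds for \emph{all} $y$ in order to control the intermediate vectors, and to exclude the degenerate case in which one of the unfolded expressions $\mathcal{A}\cdot(x^{[c]})^{t-1}$ vanishes without $\lambda=0$; pinning down the final exponent and carrying out this bookkeeping carefully is where the real work lies, after which the conclusion is immediate.
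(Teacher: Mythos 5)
Your first step---specialising the defining identity to $y=x^{[s]}$ with $s=(t-1)^{k-1}$ to get $\mathcal{A}^{k}\cdot x^{(t-1)^{k}}=\mathcal{A}\cdot(x^{[s]})^{t-1}$---is fine, but the plan stalls exactly at the point you yourself flag, and you do not supply the idea that gets past it. Unfolding $\mathcal{A}^{k}\cdot x^{(t-1)^{k}}$ as the $k$-fold iterate of $z\mapsto\mathcal{A}\cdot z^{t-1}$ lets you use the eigen-relation only at the \emph{innermost} application, where the argument is the genuine eigenvector $x$; every subsequent stage requires evaluating $\mathcal{A}\cdot(x^{[c]})^{t-1}$ for $c>1$, and $x^{[c]}$ is not an eigenvector. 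Saying that ``the identity holds for all $y$'' does not control these vectors: the $k$-T-idempotent identity only relates $\mathcal{A}^{k}$ acting on rescaled arguments back to a single application of $\mathcal{A}$; it gives no spectral information about $\mathcal{A}\cdot(x^{[c]})^{t-1}$. So the claim that ``both ends become a scalar power of $\lambda$ times the identical vector $x^{[(t-1)^{k}]}$'' cannot be established by your route, and without it there is no equation in $\lambda$ to read off. As written, the proposal is a correct diagnosis of the difficulty rather than a proof.

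The missing idea, which is how the paper argues, is to apply the eigen-relation exactly once and let the idempotence identity do all the remaining collapsing. Consider $\mathcal{A}^{k+1}\cdot x^{(t-1)^{k+1}}=\mathcal{A}^{k}\cdot(\mathcal{A}\cdot x^{t-1})^{(t-1)^{k}}$; substituting $\mathcal{A}\cdot x^{t-1}=\lambda x^{[t-1]}$ into the inner slot pulls out $\lambda^{(t-1)^{k}}$ and leaves $\mathcal{A}^{k}\cdot(x^{[t-1]})^{(t-1)^{k}}$, which the $k$-T-idempotent identity (with $y=x^{[(t-1)^{k}]}$, so that $y^{[1/s]}=x^{[t-1]}$) converts to $\mathcal{A}\cdot(x^{[(t-1)^{k}]})^{t-1}$. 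Evaluating $\mathcal{A}^{k+1}\cdot x^{(t-1)^{k+1}}$ a second time, without the eigen-relation, yields the same vector $\mathcal{A}\cdot(x^{[(t-1)^{k}]})^{t-1}$ with no $\lambda$ in front, whence $(\lambda^{(t-1)^{k}}-1)\,\mathcal{A}\cdot(x^{[(t-1)^{k}]})^{t-1}=0$. Note that the troublesome vector $\mathcal{A}\cdot(x^{[(t-1)^{k}]})^{t-1}$ is never computed: one only distinguishes whether it is nonzero (giving $\lambda^{(t-1)^{k}}=1$) or zero (in which case $x^{[(t-1)^{k}]}\neq 0$ exhibits $0$ as an eigenvalue). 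This degenerate branch is why the theorem's conclusion is a disjunction, and it is not something to ``exclude'' as your plan suggests, but the source of the second alternative in the statement.
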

\begin{proof}
Let $\lambda$ be an eigenvalue of $\mathcal{A}$, then $\mathcal{A}\cdot x^{t-1}=\lambda x^{[t-1]}$, $0\neq x\in\mathbb{C}^n$.
Multiplying by $\mathcal{A}^{k}$ on the left hand side of it, we get
\begin{eqnarray*}
\mathcal{A}^{k+1}\cdot x^{(t-1)^{k+1}} &=& A^{k}\cdot( \lambda x^{[t - 1]} )^{(t-1)^k} = \lambda ^{(t - 1)^{k} } \mathcal{A}^{k}\cdot( x^{[t - 1]})^{(t-1)^k}\\
 &=& \lambda ^{(t - 1)^{k} } \mathcal{A}^{k }\cdot\left[ \left( {\left( {x^{[t - 1]} } \right)^{\left[ {s } \right]} } \right)^{[\frac{1}{{s }}]}\right]^{(t-1)^k},
\end{eqnarray*}
where $s=(t-1)^{(k-1)}$.
Since $\mathcal{A}$ is a $k$-T-idempotent tensor, it yields that $$\mathcal{A}^k\cdot(y^{[\frac{1}{{s}}]})^{(t-1)^k}  = \mathcal{A}\cdot y^{t-1}, $$ for all $y\in\mathbb{C}^n$. Then
\begin{eqnarray*}
\mathcal{A}^{k+1}\cdot x^{(t-1)^{k+1}}&=&\lambda ^{(t - 1)^{k} } \mathcal{A}^{k }\cdot\left[ \left( {\left( {x^{[t - 1]} } \right)^{\left[ {s} \right]} } \right)^{[\frac{1}{{s }}]}\right]^{(t-1)^k} \\
& =& \lambda ^{(t - 1)^{k } } \mathcal{A}\cdot\left[\left( {x^{[t - 1]} } \right)^{\left[ {s} \right]} \right]^{t-1}\\
 & =& \lambda ^{(t - 1)^{k } } \mathcal{A}\cdot (x^{[(t - 1)^{k }] } )^{t-1} .
\end{eqnarray*}
So, $\mathcal{A}^{k+1}\cdot x^{(t-1)^{k+1}} = \lambda ^{(t - 1)^{k } } \mathcal{A}\cdot(x^{[(t - 1)^{k }] } )^{t-1}$.
By the $k$-T-idempotence, we have
\[
\mathcal{A}^{k+1}\cdot x^{(t-1)^{k+1}}  = \mathcal{A}^{k+1}\cdot\left[ \left( {x^{\left[ {(t - 1)^{k } } \right]} } \right)^{[\frac{1}{{(t - 1)^{k } }}]} \right]^{(t-1)^{k+1}} = \mathcal{A}\cdot (x^{\left[ {(t - 1)^{k} } \right]})^{t-1}.
\]
Hence,
\[
 \lambda ^{(t - 1)^{k } } \mathcal{A}\cdot (x^{[(t - 1)^{k }] } )^{t-1}=\mathcal{A}\cdot (x^{\left[ {(t - 1)^{k} } \right]})^{t-1},
\]
that is \[(\lambda ^{(t - 1)^{k } }-1 )\mathcal{A}\cdot (x^{\left[ {(t - 1)^{k} } \right]})^{t-1}=0.\]
If $\mathcal{A}\cdot (x^{\left[ {(t - 1)^{k} } \right]})^{t-1}\neq 0$, then $\lambda ^{(t - 1)^{k } }=1$. If $\mathcal{A}\cdot (x^{\left[ {(t - 1)^{k} } \right]})^{t-1}=0$, then $0$ is an eigenvalue of $\mathcal{A}$.
\end{proof}
\begin{cor}
{\rm (1)}
If $\mathcal{A}\in\mathbb{C}_{t}^{n,n}$ is a T-idempotent tensor, then the eigenvalues of $\mathcal{A}$ are the roots of $\lambda ^{{(t-1)}^2}=1$ or $0$;

{\rm (2)} If ${A}\in\mathbb{C}^{n\times n}$ is a $k$-idempotent matrix, then the eigenvalues of ${A}$ are the roots of $\lambda^k=1$ or $0$.
\end{cor}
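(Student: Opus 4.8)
The plan is to obtain both items directly from the preceding theorem on the eigenvalues of $k$-T-idempotent tensors, with no computation beyond choosing the right values of the parameters $t$ and $k$ and unwinding the relevant definitions.

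For part (1), I would first invoke the remark recorded just after Definition \ref{defi [2]}: when $k=2$ the defining identity of a $k$-T-idempotent tensor reads $\mathcal{A}^2\cdot(y^{[\frac{1}{t-1}]})^{(t-1)^2}=\mathcal{A}\cdot y^{t-1}$, which is exactly the definition of a T-idempotent tensor. Hence a T-idempotent tensor $\mathcal{A}\in\mathbb{C}_t^{n,n}$ is in particular $2$-T-idempotent, and applying the previous theorem with $k=2$ gives immediately that every eigenvalue $\lambda$ of $\mathcal{A}$ satisfies $\lambda^{(t-1)^2}=1$ or $\lambda=0$, which is the assertion.

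For part (2), I would specialize to order $t=2$, where $\mathbb{C}_2^{n,n}$ is just $\mathbb{C}^{n\times n}$. The one point needing care is that for $t=2$ all the auxiliary normalizations collapse: the exponent $s=(t-1)^{k-1}$ equals $1$, so $y^{[\frac{1}{s}]}=y$; the general tensor product of order-$2$ tensors is ordinary matrix multiplication, so the iterated general product $\mathcal{A}^{k}$ is the usual matrix power $A^{k}$; and Qi's eigenvalue equation $\mathcal{A}\cdot x^{t-1}=\lambda x^{[t-1]}$ becomes $Ax=\lambda x$. Consequently the defining identity $\mathcal{A}^{k}\cdot(y^{[\frac{1}{s}]})^{(t-1)^{k}}=\mathcal{A}\cdot y^{t-1}$ of a $k$-T-idempotent tensor becomes $A^{k}y=Ay$ for every $y$, i.e.\ $A^{k}=A$, which is precisely the hypothesis that $A$ is a $k$-idempotent matrix. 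The previous theorem then applies and yields the stated eigenvalue condition.

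Since the statement is a corollary, there is no substantial obstacle: the only things to check are the trivial observation that a T-idempotent tensor satisfies the $k=2$ hypothesis of the theorem, and the bookkeeping in the $t=2$ reduction just described, namely that the bracket operation and the rescaling exponent both trivialize and that the general product degenerates to the matrix product. Both checks are immediate once the definitions are written out.
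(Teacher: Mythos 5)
Part (1) is exactly what the paper intends: a T-idempotent tensor is by definition a $2$-T-idempotent tensor, so putting $k=2$ into the preceding theorem gives $\lambda^{(t-1)^2}=1$ or $0$. No issue there.

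Part (2) does not go through as you describe, and this is a genuine gap. Your reduction of the \emph{hypothesis} to $t=2$ is correct: the general product degenerates to matrix multiplication, $s=(t-1)^{k-1}=1$, and $k$-T-idempotence becomes $A^k=A$. But you must also substitute $t=2$ into the \emph{conclusion} of the theorem, and there the exponent $(t-1)^k$ collapses to $1^k=1$. The theorem therefore yields only ``$\lambda=1$ or $0$'', not ``$\lambda^k=1$ or $0$''; the exponent $k$ in the corollary simply cannot be produced by this specialization. (One could say $\lambda\in\{0,1\}$ formally implies $\lambda^k=1$ or $\lambda=0$, but that is plainly not the content being asserted; worse, the $t=2$, $k\ge 3$ instance of the theorem is false --- the $1\times 1$ matrix $A=(-1)$ satisfies $A^3=A$, hence is $3$-T-idempotent of order $2$, yet its eigenvalue $-1$ is neither $0$ nor $1$ --- so the specialization argument would be deducing the corollary from a false premise.) To obtain a statement with exponent $k$ you have to run the matrix computation directly: $Ax=\lambda x$ and $A^k=A$ give $\lambda^k x=A^k x=Ax=\lambda x$, hence $\lambda^k=\lambda$, i.e.\ $\lambda=0$ or $\lambda^{k-1}=1$; this matches the corollary's ``$\lambda^k=1$ or $0$'' only under the convention that a $k$-idempotent matrix means $A^{k+1}=A$. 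Either way, the sentence ``the previous theorem then applies and yields the stated eigenvalue condition'' is where your argument fails: the theorem, specialized to $t=2$, does not yield that condition, and you should have flagged the mismatch in exponents rather than asserting the reduction is immediate.
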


\begin{thm}\label{THM 1}
If $\mathcal{A}\in\mathbb{C}_{t}^{n,n}$ is an idempotent tensor, then
the eigenvalues of $\mathcal{A}$ are $1$ or $0$.
\end{thm}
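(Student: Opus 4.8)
The plan is to argue directly from the defining identity of an idempotent tensor, in the spirit of the proof of the preceding theorem on $k$-T-idempotent tensors but using a single substitution rather than an iteration. Suppose $\lambda$ is an eigenvalue of $\mathcal{A}$, so that $\mathcal{A}\cdot x^{t-1}=\lambda x^{[t-1]}$ for some $0\neq x\in\mathbb{C}^n$. If $\lambda=0$ there is nothing to prove, so assume $\lambda\neq 0$ and fix a $(t-1)$-st root $\mu$ of $\lambda$, i.e. $\mu^{t-1}=\lambda$ (necessarily $\mu\neq 0$).

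First I would rescale the eigenvector. Put $u=\mu^{-1}x$; by the homogeneity of $\mathcal{A}\cdot(\,\cdot\,)^{t-1}$ recorded in Eq.(\ref{Eigen}), $\mathcal{A}\cdot u^{t-1}=\mu^{-(t-1)}\mathcal{A}\cdot x^{t-1}=\lambda^{-1}\lambda x^{[t-1]}=x^{[t-1]}$. Now substitute $y=u$ into the idempotent identity $\mathcal{A}\cdot[(\mathcal{A}\cdot y^{t-1})^{[\frac{1}{t-1}]}]^{t-1}=\mathcal{A}\cdot y^{t-1}$. Since $\mathcal{A}\cdot u^{t-1}=x^{[t-1]}$, the inner bracket is $(x^{[t-1]})^{[\frac{1}{t-1}]}$, which --- with the same convention on fractional bracket powers used in the proof of the preceding theorem, where $x^{[t-1]}$ is rewritten as $((x^{[t-1]})^{[s]})^{[\frac{1}{s}]}$ --- may be taken to be $x$. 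The identity therefore becomes $\mathcal{A}\cdot x^{t-1}=x^{[t-1]}$. Comparing with $\mathcal{A}\cdot x^{t-1}=\lambda x^{[t-1]}$ gives $(\lambda-1)x^{[t-1]}=0$, and since $x$ has a nonzero coordinate we get $\lambda=1$. Hence every eigenvalue of $\mathcal{A}$ equals $0$ or $1$.

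I expect the only point requiring care to be the bookkeeping of the $(t-1)$-st root, that is, the passage from $(x^{[t-1]})^{[\frac{1}{t-1}]}$ to $x$ (equivalently, from $(\mathcal{A}\cdot x^{t-1})^{[\frac{1}{t-1}]}$ to $\mu x$): coordinatewise principal roots need not return this value exactly, so one relies on the paper's convention that $(\,\cdot\,)^{[\frac{1}{r}]}$ undoes $(\,\cdot\,)^{[r]}$, exactly as in the preceding proof. If one prefers to avoid the rescaling, one may instead substitute $y=x$ directly: then $(\mathcal{A}\cdot x^{t-1})^{[\frac{1}{t-1}]}=\mu x$, the identity reads $\mathcal{A}\cdot(\mu x)^{t-1}=\mathcal{A}\cdot x^{t-1}$, i.e. $(\mu^{t-1}-1)\mathcal{A}\cdot x^{t-1}=(\lambda-1)\mathcal{A}\cdot x^{t-1}=0$, and $\mathcal{A}\cdot x^{t-1}=\lambda x^{[t-1]}\neq 0$ again forces $\lambda=1$. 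Finally one records the case $t=2$ as a corollary: an idempotent matrix has only the eigenvalues $0$ and $1$.
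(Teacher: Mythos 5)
Your argument is correct and is essentially the paper's own proof: the authors likewise substitute the eigenvector into the idempotency identity, use $(\mathcal{A}\cdot x^{t-1})^{[\frac{1}{t-1}]}=\lambda^{\frac{1}{t-1}}x$ together with homogeneity to get $(\lambda-1)\mathcal{A}\cdot x^{t-1}=0$, and split into the cases $\mathcal{A}\cdot x^{t-1}\neq 0$ and $\mathcal{A}\cdot x^{t-1}=0$. Your second, unrescaled variant coincides with their proof line for line, and both arguments rest on the same convention that $(\,\cdot\,)^{[\frac{1}{t-1}]}$ inverts $(\,\cdot\,)^{[t-1]}$.
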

\begin{proof}
 Let $\lambda$ be an eigenvalue of $\mathcal{A}$, then $\mathcal{A}\cdot x^{t-1}=\lambda x^{[t-1]}$, $0\neq x\in\mathbb{C}^n$. So $(\mathcal{A}\cdot x^{t-1})^{[\frac{1}{t-1}]}=\lambda^{\frac{1}{t-1}} x$.
Multiplying by $\mathcal{A}$ on the left hand side of it, we obtain $\mathcal{A}\cdot[(\mathcal{A}\cdot x^{t-1})^{[\frac{1}{t-1}]}]^{t-1}=\lambda\mathcal{A}\cdot x^{t-1}$.
Since $\mathcal{A}$ is an idempotent tensor, $\mathcal{A}\cdot x^{t-1}=\lambda\mathcal{A}\cdot x^{t-1}$, that is $(\lambda-1)\mathcal{A}\cdot x^{t-1}=0$.
If $\mathcal{A}\cdot x^{t-1}\neq 0$, then $\lambda =1$. If $\mathcal{A}\cdot x^{t-1}=0$, then $0$ is an eigenvalue of $\mathcal{A}$.
\end{proof}

\begin{pro}
Let $\mathcal{A}\in\mathbb{C}_{t}^{m,n}$ and let $\mathcal{A}^{(1)_k},~\mathcal{A}^{(2)_k}\in\mathbb{C}_{k}^{n,m}$ be the order $k$ {\rm \{1\}} inverse and {\rm \{2\}} inverse of $\mathcal{A}$, respectively. Then

{\rm{(1)}} $\mathcal{A}\mathcal{A}^{(2)_k}$ is a T-idempotent tensor;

{\rm{(2)}} $\mathcal{A}\mathcal{A}^{(1)_k}$ is an idempotent tensor and the eigenvalues of it are $1$ or $0$.
\end{pro}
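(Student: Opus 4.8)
The plan is to translate both statements into their defining functional equations and then carry out all manipulations through the action of tensors on vectors. The single tool needed is the \emph{composition identity}: for $\mathcal{P}\in\mathbb{C}_p^{m,n}$ and $\mathcal{Q}\in\mathbb{C}_q^{n,\ell}$ one has $\mathcal{P}\mathcal{Q}\in\mathbb{C}_{(p-1)(q-1)+1}^{m,\ell}$ and, for every $y\in\mathbb{C}^\ell$,
$$(\mathcal{P}\mathcal{Q})\cdot y^{(p-1)(q-1)}=\mathcal{P}\cdot(\mathcal{Q}\cdot y^{q-1})^{p-1},$$
which comes out of a one-line expansion of Eq.(\ref{product}) and Eq.(\ref{Eigen}); I shall also use associativity of the general product. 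Put $s=(t-1)(k-1)$. Since $\mathcal{A}\mathcal{A}^{(1)_k},\mathcal{A}\mathcal{A}^{(2)_k}\in\mathbb{C}_{s+1}^{m,m}$, the ``$t-1$'' appearing in Definition \ref{defi [2]} and in the definition of idempotent tensor equals $s$ here, so (1) amounts to $(\mathcal{A}\mathcal{A}^{(2)_k})^2\cdot(y^{[1/s]})^{s^2}=(\mathcal{A}\mathcal{A}^{(2)_k})\cdot y^s$ and (2) to $(\mathcal{A}\mathcal{A}^{(1)_k})\cdot[((\mathcal{A}\mathcal{A}^{(1)_k})\cdot y^s)^{[1/s]}]^s=(\mathcal{A}\mathcal{A}^{(1)_k})\cdot y^s$, for all $y\in\mathbb{C}^m$.

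For (1), write $\mathcal{B}=\mathcal{A}\mathcal{A}^{(2)_k}$. The composition identity (applied to $\mathcal{B}\mathcal{B}$, and to $\mathcal{B}=\mathcal{A}\mathcal{A}^{(2)_k}$) gives $\mathcal{B}^2\cdot(y^{[1/s]})^{s^2}=\mathcal{B}\cdot w^s$ with $w:=\mathcal{B}\cdot(y^{[1/s]})^s=\mathcal{A}\cdot u^{t-1}$ and $u:=\mathcal{A}^{(2)_k}\cdot(y^{[1/s]})^{k-1}$. The crux is then to show $\mathcal{A}^{(2)_k}\cdot w^{k-1}=\mathcal{A}^{(2)_k}\cdot y^{k-1}$: using the composition identity and associativity I rewrite the left-hand side successively as $(\mathcal{A}^{(2)_k}\mathcal{A})\cdot u^s$ and then as $(\mathcal{A}^{(2)_k}\mathcal{A}\mathcal{A}^{(2)_k})\cdot(y^{[1/s]})^{s(k-1)}$, and this equals $\mathcal{A}^{(2)_k}\cdot y^{k-1}$ by the defining equation (2) of the $\{2\}$ inverse (Definition \ref{defi [3]}). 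Feeding this back, $\mathcal{B}\cdot w^s=\mathcal{A}\cdot(\mathcal{A}^{(2)_k}\cdot w^{k-1})^{t-1}=\mathcal{A}\cdot(\mathcal{A}^{(2)_k}\cdot y^{k-1})^{t-1}=\mathcal{B}\cdot y^s$, which is precisely the T-idempotence of $\mathcal{B}$.

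For (2), write $\mathcal{B}=\mathcal{A}\mathcal{A}^{(1)_k}$ and fix $y\in\mathbb{C}^m$. By the composition identity $\mathcal{B}\cdot y^s=\mathcal{A}\cdot v^{t-1}$ with $v:=\mathcal{A}^{(1)_k}\cdot y^{k-1}\in\mathbb{C}^n$, so $\mathcal{B}\cdot y^s\in\textbf{R}(\mathcal{A})$. Applying Definition \ref{defi [1]} to the vector $v$ yields $\mathcal{A}\mathcal{A}^{(1)_k}\cdot[(\mathcal{A}\cdot v^{t-1})^{[1/s]}]^s=\mathcal{A}\cdot v^{t-1}$, i.e.\ $\mathcal{B}\cdot[(\mathcal{B}\cdot y^s)^{[1/s]}]^s=\mathcal{B}\cdot y^s$, so $\mathcal{B}$ is idempotent; the eigenvalue claim is then immediate from Theorem \ref{THM 1}. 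The main obstacle is purely bookkeeping — keeping track of the orders of the tensors produced by the general product and of the nested componentwise powers $(\cdot)^{[1/s]}$ — with the one genuinely non-obvious move being the re-bracketing $(\mathcal{A}^{(2)_k}\mathcal{A})\cdot u^s=(\mathcal{A}^{(2)_k}\mathcal{A}\mathcal{A}^{(2)_k})\cdot(y^{[1/s]})^{s(k-1)}$ in part (1).
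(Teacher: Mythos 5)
Your proposal is correct and follows essentially the same route as the paper: part (1) is the paper's one-line regrouping $(\mathcal{A}\mathcal{A}^{(2)_k})^2=\mathcal{A}(\mathcal{A}^{(2)_k}\mathcal{A}\mathcal{A}^{(2)_k})$ followed by the $\{2\}$-inverse identity, which you merely unpack via the composition identity, and part (2) is exactly the paper's substitution $z=\mathcal{A}^{(1)_k}\cdot y^{k-1}$ into Definition \ref{defi [1]} plus Theorem \ref{THM 1}. Your version is somewhat more careful (e.g.\ taking $y\in\mathbb{C}^m$ rather than the paper's $y\in\mathbb{C}^n$), but the underlying argument is the same.
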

\begin{proof}
{\rm{(1)}}  Form the definition of the tensor {\{2\} inverse}, it yields that
$${({\cal A}{{\cal A}^{{{(2)}_k}}})^2}\cdot({y^{[\frac{1}{s}]}})^{s^2} = {\cal A}({{\cal A}^{{{(2)}_k}}}{\cal A}{{\cal A}^{{{(2)}_k}}})\cdot({y^{[\frac{1}{s}]}})^{s^2} = {\cal A}{{\cal A}^{{{(2)}_k}}}\cdot y^s,$$ for all $y\in\mathbb{C}^n$, where $s=(t-1)(k-1)$.
Hence, $\mathcal{A}\mathcal{A}^{(2)_k}$ is a T-idempotent tensor.

{\rm{(2)}} By the definition of the tensor \{1\} inverse, we have $\mathcal{A}\mathcal{A}^{(1)_k}\cdot[(\mathcal{A}\cdot z^{t-1})^{[\frac{1}{s}]}]^s=\mathcal{A}\cdot z^{t-1}$, for all $z\in\mathbb{C}^n$, where $s=(t-1)(k-1)$. Let $z=\mathcal{A}^{(1)_k}\cdot y^{k-1}$, $y\in\mathbb{C}^n$ is an arbitrary vector. Then $\mathcal{A}\mathcal{A}^{(1)_k}\cdot[(\mathcal{A}\mathcal{A}^{(1)_k}\cdot y^s)^{[\frac{1}{s}]}]^s=\mathcal{A}\mathcal{A}^{(1)_k}\cdot y^s$.
It yields that $\mathcal{A}\mathcal{A}^{(1)_k}$ is an idempotent tensor. And it follows form Theorem \ref{THM 1} that the eigenvalues of it are $1$ or $0$.
\end{proof}

\section{Some examples}

Let the vector $\alpha_i\in\mathbb{C}^{n_i}$ ($i=1,\ldots,t$), the \textit{outer product} of $\alpha_1, \ldots, \alpha_t$, denoted by $\alpha_1\otimes\cdots\otimes \alpha_t$, is a tensor $\mathcal{A}=(a_{i_1\cdots i_t})\in\mathbb{C}^{n_1\times\cdots\times n_t}$ with entries $a_{i_1\cdots i_t}=(\alpha_1)_{i_1}\cdots(\alpha_t)_{i_t}$, where $(\alpha_i)_j$ is the $j$-th component of $\alpha_i$.
The tensor $\mathcal{A}=(a_{i_1\cdots i_t})\in\mathbb{C}^{n_1\times\cdots\times n_t}$ can be decomposed into the form as
\[
\mathcal{A}=\sum_{j\in {[r]}}\alpha_{1}^j\otimes\cdots\otimes \alpha_{t}^j,
\]
where $ \alpha_{i}^j\in\mathbb{C}^{n_i}$ ($ i\in[t], j\in[r]$) (see \cite{Lim H}).
Let the matrix $B_k\in\mathbb{C}^{m_k\times n_k}$ ($k\in[t]$) and let $(B_k)_{j,i}$ be the $(j, i)$-entry of $B_k$. By the \textit{Tucker's product}, we get a tensor  $\mathcal{A'} =(a'_{j_1\cdots j_t})\in\mathbb{C}^{m_1\times\cdots\times m_t}$ as follows (see\cite{Lim H})
 \[
\mathcal{A}'=(B_{1},\ldots,B_{t})\cdot\mathcal{A} =\sum_{j\in[r]}B_1\alpha_{1}^j\otimes\cdots\otimes B_t\alpha_{t}^j,
 \]
where
 \[
 a'_{j_1\cdots j_t}=\sum_{i_1,\ldots,i_t=1}^{n_1,\ldots,n_t}(B_1)_{j_1,i_1}\cdots (B_t)_{j_t,i_t}a_{i_1\cdots i_t}.
 \]

If a tensor $\mathcal{A}\in\mathbb{C}_{t}^{m,n}$ can be decomposed into the form as
\[
\mathcal{A}=\sum\limits_{{i} \in [r]} \lambda_i e_i\otimes\alpha_i\otimes\alpha_i\otimes\cdots\otimes\alpha_i,
\]
where the vectors $\alpha_1,\ldots, \alpha_r \in\mathbb{C}^{n}$ are linearly independent, $\lambda_i\in \mathbb{C}$. Let the matrix $A=(\alpha_1~\alpha_2~\cdots~\alpha_r)\in\mathbb{C}^{n\times r}$ and $B=(e_1~e_2~\cdots~e_r)\in\mathbb{C}^{m\times r}$, $e_i$ is the unit vector with $i$-th component being $1$, then
\[
\mathcal{A}= (B,A,\ldots,A)\cdot \mathcal{D}=B\mathcal{D}A^{\rm{T}},
\]
where $\mathcal{D}\in\mathbb{C}_{t}^{r,r}$ is a diagonal tensor with the diagonal entries $\lambda_1, \ldots, \lambda_r$ (see \cite{S}).
Let $\mathcal{D}^{(1)_k},~\mathcal{D}^{(2)_k}\in \mathbb{C}_k^{r,r}$ be the order $k$ \{1\} inverse and \{2\} inverse of $\mathcal{A}$,
respectively. Similar to Theorem \ref{THM 2} and Theorem \ref{t 3}, we get
\[
\mathcal{A}^{(1)_k}=(A^\mathrm{T})^{(1)}\mathcal{D}^{(1)_k}B^\mathrm{T}\in \mathbb{C}_k^{n,m},
\]
\[
\mathcal{A}^{(2)_k}=(A^\mathrm{T})^{(2)}\mathcal{D}^{(2)_k}B^\mathrm{T}\in \mathbb{C}_k^{n,m}.
\]

Next, two examples are showed by the above discussion.
Let $\mathcal{A}=(\mathcal{A}_1|\mathcal{A}_2|\cdots|\mathcal{A}_t)\in \mathbb{C}^{n_1\times\cdots \times n_t}$, where $\mathcal{A}_i=(a_{ii_2\cdots i_t})$, $i\in[n_1]$.
\begin{example}
Let $\mathcal{A}$ be a $3\times 3\times 3$ tensor as follows
\[
\mathcal{A }= \left( {\begin{array}{*{20}{c}}
   {\left. {\begin{array}{*{20}{c}}
   1 & 2 & 3~  \\
   2 & 4 & 6~  \\
   3 & 6 & 9~  \\
\end{array}} \right|} & {\left. {\begin{array}{*{20}{c}}
   {16} & 8 & 4~  \\
   8 & 4 & 2~  \\
   4 & 2 & 1~  \\
\end{array}} \right|} & {\begin{array}{*{20}{c}}
   4 & 6 & 8  \\
   6 & 9 & {12}  \\
   8 & {12} & {16}  \\
\end{array}}  \\
\end{array}} \right),
\]
then the below tensor $\mathcal{B}$ is both a $\{1\}$ inverse and a $\{2\}$ inverse of $\mathcal{A}$,
\[
\mathcal{B }= \left( {\begin{array}{*{20}{c}}
   {\left. {\begin{array}{*{20}{c}}
   5 & -5 & 0~  \\
   5 & 1 & 0~  \\
   0 & 0 & -4~  \\
\end{array}} \right|} & {\left. {\begin{array}{*{20}{c}}
   {-14} & 14 & 0~  \\
   14 & -2 & 0~  \\
   0 & 0 & 11~  \\
\end{array}} \right|} & {\begin{array}{*{20}{c}}
   8 & -8 & 0  \\
   8 & 1 & {0}  \\
   0 & {0} & {-6}  \\
\end{array}}  \\
\end{array}} \right).
\]
\end{example}




In the following, the examples of T-idempotent tensor and idempotent tensor are given.
\begin{example}~\\
The tensor $\mathcal{A}=(\mathcal{A}_1\mid\mathcal{A}_2)=
\left( {\left. {\begin{array}{*{20}{c}}
   {25} & { - 15{\rm{ }}}  \\
   { - 15} & 9  \\
\end{array}} \right|\begin{array}{*{20}{c}}
   {{\rm{ }}100} & { - 60}  \\
   { - 60} & {36}  \\
\end{array}} \right)$ is an idempotent tensor;\\
The tensor $\mathcal{B}=(\mathcal{B}_1\mid\mathcal{B}_2)=
\left( {\left. {\begin{array}{*{20}{c}}
   9 & { - 6{\rm{ }}}  \\
   { - 6} & 4  \\
\end{array}} \right|\begin{array}{*{20}{c}}
   {36} & { - 24}  \\
   { - 24} & {16}  \\
\end{array}} \right)
$ is a T-idempotent tensor.

\end{example}

\vspace{3mm}
\noindent
\textbf{References}

\end{document}